\tikzset{shorten <>/.style={shorten >=#1,shorten <=#1}}
\tikzset{%
    symbol/.style={%
        ,draw=none
        ,every to/.append style={%
            ,edge node={%
                node [%
                    ,sloped
                    ,allow upside down
                    ,auto=false
                    ]{$#1$}
                }
            }
        }
    }
\newcommand{\cC}{\mathcal{C}}
\newcommand{\cD}{\mathcal{D}}
\newcommand{\cJ}{\mathcal{J}}
\newcommand{\cS}{\mathcal{S}}
\newcommand{\cX}{\mathcal{X}}
\newcommand{\cat}{\cC\!\mathit{at}}
\DeclareMathOperator*{\colim}{colim}
\newcommand{\C}{\mathcal{C}}
\numberwithin{equation}{section}
\theoremstyle{plain}
\newtheorem{thm}[equation]{Theorem}
\newtheorem*{thm*}{Theorem}
\newtheorem{cor}[equation]{Corollary}
\newtheorem{lem}[equation]{Lemma}
\newtheorem{prop}[equation]{Proposition}
\newtheorem*{prop*}{Proposition}
\theoremstyle{definition}
\newtheorem{defn}[equation]{Definition}
\theoremstyle{remark}
\newtheorem{rem}[equation]{Remark}
\newtheorem{ex}[equation]{Example}
\newtheorem{notation}[equation]{Notation}
\begin{document}
\title{Goodwillie Towers of $\infty$-categories and Desuspension}
\author[D.~Fuentes-Keuthan]{Daniel Fuentes-Keuthan}
\address{Department of Mathematics,
Johns Hopkins University, Baltimore
}
\email{danielfk@jhu.edu}
\address{}
\date{}
\begin{abstract}
   We reconceptualize the process of forming $n$-excisive approximations to $\infty$-categories, in the sense of Heuts, as inverting the suspension functor lifted to $A_n$-cogroup objects.  We characterize $n$-excisive $\infty$-categories as those $\infty$-categories in which $A_n$-cogroup objects admit desuspensions. Applying this result to pointed spaces we reprove a theorem of Klein-Schwänzl-Vogt: every 2-connected cogroup-like $A_\infty$-space admits a desuspension. 
\end{abstract}
\maketitle
\tableofcontents


\section*{Introduction}
Given a suitably nice $\infty$-category $\C$ one may form the stablization $Sp(\C)$ by formally inverting the suspension functor $\Sigma:\C\to \C$, as given by the filtered colimit \[\C\xrightarrow{\Sigma}\C\xrightarrow{\Sigma}\dots\C\xrightarrow{\Sigma}\dots\to Sp(\C) \]
This new category $Sp(\C)$ is easier to work with: it is the homotopic analogue of the category of abelian group objects in $\C$ and satisfies formal properties which make it into a sort of linearization of $\C$.  This stabilization procedure however loses a lot of information about $\C$, since, for instance, in $Sp(\C)$ we can recover an object from only its suspension. One might remedy this harsh truncation by requiring additional structure on the suspension in order to recover the object.  In particular the suspension functor lifts to a functor landing in $A_\infty$-cogroup objects in $\C$. \[\Sigma: \C\to coGrp_\infty(\C) \]
By forgetting down to the $A_n$-cogroup structures for each $n$ one obtains a collection of functors \[\Sigma: \C\to coGrp_n(\C) \] which can be formally inverted by taking the filtered colimit of \[ \C\xrightarrow{\Sigma} coGrp_n(\C)\xrightarrow{\Sigma}coGrp_n(coGrp_n(\C))\xrightarrow{\Sigma}\cdots \] to give a tower of approximations to $\C$ which capture more information as $n$ grows larger.  In particular $coGrp_1(\C)\simeq \C$ so the first stage of this process is just the stabilization $Sp(\C)$.

In \cite{H} Heuts introduces another tower of $\infty$-categories which interpolates between $\C$ and $Sp(\C)$ \[\dots\xrightarrow{\Sigma_{n+2, n+1}}P_{n+1}\C\xrightarrow{\Sigma_{n+1, n}}P_n\C\xrightarrow{\Sigma_{n, n-1}}\dots P_1\C \simeq Sp(\C)\]
via the $n$-excisive approximations $P_n(\C)$ to $\C$. We refer to this tower as the Goodwillie tower of $\C$. The $\infty$-category $P_n(\C)$ is built by formally inverting the suspension functor viewed as landing in an $\infty$-category $T_n(\C)$ of certain ``special" punctured $(n+1)$-cubes.

The main goal of this paper is to prove that the two constructions above are the same by showing that the $\infty$-category $T_n(\C)$ gives a new model for $A_n$-cogroup objects in $\C$.  Namely we have Theorem \ref{cubeiscogroup}:

\begin{thm*} Let $\C$ be a pointed $\infty$-category with finite colimits, then there is an equivalence \[coGrp_n(\C)\overset{\sim}{\longrightarrow} T_n(\C)\]
\end{thm*}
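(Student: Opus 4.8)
The plan is to realize both sides as full subcategories of a common functor $\infty$-category and to produce a single comparison functor that is manifestly compatible with the two suspensions. On the cogroup side I would first unwind $coGrp_n(\C)$ through a model of $A_n$-cogroups as reduced co-Segal objects: a (truncated) diagram $C_\bullet$ out of the $n$-truncated simplex category $\Delta_{\le n}$ with $C_0 \simeq *$ and with the co-Segal maps $C_1 \sqcup_{C_0}\cdots\sqcup_{C_0} C_1 \xrightarrow{\sim} C_k$ equivalences for $k \le n$, so that such an object is pinned down by $C_1$ together with its comultiplication, coassociativity, and counit data recorded up to level $n$. By contrast, $T_n(\C)$ sits inside the diagram category on the punctured $(n+1)$-cube poset $\mathcal{P}_0$, cut out by the cocartesianness conditions packaged into the special-cube definition. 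The bridge between the two indexing shapes should be a functor $\gamma\colon \Delta_{\le n}\to \mathcal{P}_0$ sending a simplicial level to the corresponding face of the cube, so that restriction and left Kan extension along $\gamma$ furnish candidate functors in both directions.

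Concretely, I would define $\Phi\colon coGrp_n(\C)\to T_n(\C)$ by left Kan extending the generating data of a cogroup $C_\bullet$ and forming the iterated pushouts prescribed by the cube poset; the reducedness $C_0\simeq *$ is exactly what forces the resulting cube to land among the \emph{special} cubes. The first substantive check is that $\Phi$ intertwines the two suspensions, i.e. that $\Phi(\Sigma X)$ is the special cube $\Sigma X$, where on the left $\Sigma X$ carries its canonical pinch cogroup structure; this should follow by computing both sides from the evident cube built out of $* \leftarrow X \rightarrow *$. I would then attack essential surjectivity by running the construction backwards: given a special cube, its low-dimensional faces recover a candidate comultiplication, and the cocartesianness built into the special condition should supply precisely the coherences needed to reassemble a co-Segal object, with the top-dimensional faces encoding the level-$n$ truncation.

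The hard part will be matching the conditions on the nose: showing that the special condition on the $(n+1)$-cube is \emph{equivalent} to, and not merely implied by, the co-Segal condition up to level $n$, which is what makes $\Phi$ fully faithful with essential image all of $T_n(\C)$. I expect to run this as an induction on $n$, with base case the identifications $coGrp_1(\C)\simeq \C\simeq T_1(\C)$ noted in the introduction (the span $* \leftarrow X \rightarrow *$). The inductive step should rest on a pasting-of-pushouts analysis: peeling off the top vertex exhibits a special $(n+1)$-cube as a cocartesian extension of a special $n$-cube, paralleling the way the level-$n$ co-Segal condition is generated by the lower conditions together with one new face. Full faithfulness would then reduce to comparing mapping spaces computed as limits over the two indexing diagrams; I anticipate that cofinality properties of $\gamma$, together with the fact that both subcategories consist of diagrams determined by their generating data, make these limit comparisons formal once the objectwise equivalence is established.
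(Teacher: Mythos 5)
There is a genuine gap, and it starts at the foundation: your bridge between the two indexing shapes runs in the wrong direction, and this breaks the construction of $\Phi$. A functor $\gamma\colon \Delta_{n}\to \square[n+1]_0$ cannot do the job: the cube poset has at most one morphism between any two vertices, so $\gamma$ must pick one chain of subsets and collapse the $\binom{k+1}{j+1}$ distinct injections $[j]\to[k]$ onto single poset maps; restriction along such a $\gamma$ discards almost all of a special cube, and left Kan extension along it places zero objects (empty colimits) at every vertex not containing the chosen chain. The functor that actually relates the shapes goes the other way: $\square[n+1]_0 \cong \Delta^{inj}/[n]$ (an injection $[k]\to[n]$ names a subset of $[n]$), and the equivalence $coGrp_n(\C)\to T_n(\C)$ is \emph{restriction} along the projection $\Delta^{inj}/[n]\to\Delta_n$, so that the vertex at $S$ is $C_{|S|-1}$ and the edges of the cube are the cosimplicial structure maps. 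This matters because your guiding idea --- that the cube is ``formed by iterated pushouts prescribed by the cube poset'' from generating data, with reducedness forcing specialness --- is false: a special cube is \emph{not} a left Kan extension of its low-dimensional skeleton. At a cardinality-three subset $\{i,j,k\}$, Kan extension of the cardinality-$\le 2$ data yields $X_{ij}\amalg X_{ik}\amalg X_{jk}$, whereas the special cube has $X_{ij}\amalg X_{ik}$. The maps from cardinality-two to cardinality-three vertices are genuine \emph{data}, constrained but not determined by the cocartesian conditions --- they are exactly where the comultiplication $\partial_1\colon C_1\to C_2$ lives. If special cubes were freely generated by pushouts from $C_1$, then $T_n(\C)$ would collapse to a product of copies of $\C$ and the theorem would be false; strong cocartesianness of the restricted cube comes from the cogroup-like condition $(2')$, not from reducedness.

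The second gap is in the inductive step, where your ``pasting-of-pushouts'' and ``cofinality'' appeals do not engage the real difficulty: one must compare $\infty$-categories, i.e.\ control morphisms and all higher simplices, not match conditions on objects. The paper's induction works because it has a coherence input you lack: Lurie's theorem that extending an $A_{n-1}$-cogroup to an $A_n$-\emph{precogroup} is a trivial fibration (\cite[Theorem 4.1.5.8]{HA}), together with three cube-side lemmas --- that the square relating $\Delta^0_n$, $\Delta_n$ and their cubical counterparts is a pushout of simplicial sets (hence induces a pullback of functor categories), that special cocartesian $n$-cubes carry no data beyond their cardinality-one vertices, and that a special punctured cube takes all cardinality-two subsets to equivalent objects. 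These let one compare the fibers of $coGrp_n(\C)\to coGrp_{n-1}(\C)$ and $T_n(\C)\to T_{n-1}(\C)$ over a fixed $A_{n-1}$-cogroup and conclude that the evident square is a pullback. Your proposal would need a substitute for this entire mechanism; ``the level-$n$ co-Segal condition is generated by the lower conditions together with one new face'' is precisely the statement whose homotopy-coherent form is Lurie's theorem, and it cannot be obtained by a formal cofinality argument about $\gamma$, since no such $\gamma$ with good properties exists.
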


As a corollary of this result we obtain a characterization of $n$-excsisive $\infty$-categories as those $\infty$-categories in which one can desuspend $A_n$-cogroup objects.  This is Theorem \ref{layer}:

\begin{thm*} Let $\C$ be a pointed, compactly generated $\infty$-category. Then $\C$ is $n$-excisive if and only if the  suspension functor induces an equivalence
\[\Sigma:\C\overset{\sim}{\longrightarrow} coGrp_n(\C) \]
\end{thm*}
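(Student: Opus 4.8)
The plan is to deduce the statement from the identification in Theorem~\ref{cubeiscogroup} together with Heuts' construction of the Goodwillie tower. By Theorem~\ref{cubeiscogroup} the equivalence $coGrp_n(\C)\simeq T_n(\C)$ is natural in $\C$ and, as part of that construction, intertwines the two suspension functors; hence the lifted suspension $\Sigma\colon\C\to coGrp_n(\C)$ is an equivalence if and only if Heuts' suspension $\Sigma\colon\C\to T_n(\C)$ is. It therefore suffices to show that $\C$ is $n$-excisive precisely when $\Sigma\colon\C\to T_n(\C)$ is an equivalence. Throughout I recall from \cite{H} that
\[\C\xrightarrow{\Sigma}T_n(\C)\xrightarrow{\Sigma}T_n^2(\C)\xrightarrow{\Sigma}\cdots\longrightarrow P_n(\C)\]
exhibits $P_n(\C)$ as the filtered colimit, and that $\C$ is $n$-excisive exactly when the structure map $\iota_\C\colon\C\to P_n(\C)$ is an equivalence.

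For the ``if'' direction, suppose $\Sigma\colon\C\to T_n(\C)$ is an equivalence. Naturality of the suspension in the $\infty$-category variable gives, for any functor $F\colon\C\to\cD$, a commuting square relating $\Sigma_\C$, $\Sigma_{\cD}$, $F$, and $T_n(F)$; taking $F=\Sigma_\C$ and using that $T_n$ preserves equivalences, a two-out-of-three argument shows $\Sigma\colon T_n(\C)\to T_n^2(\C)$ is again an equivalence. Iterating, every map in the tower is an equivalence, so $\iota_\C$ is an equivalence and $\C$ is $n$-excisive.

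The ``only if'' direction is the heart of the matter. Assuming $\C$ is $n$-excisive, so that $\iota_\C\colon\C\xrightarrow{\sim}P_n(\C)$, I would transport the problem onto $P_n(\C)$: applying naturality of $\Sigma$ to the functor $\iota_\C$ produces a square whose vertical maps are $\iota_\C$ and $T_n(\iota_\C)$, both equivalences since $T_n$ preserves equivalences, so by two-out-of-three $\Sigma\colon\C\to T_n(\C)$ is an equivalence if and only if $\Sigma\colon P_n(\C)\to T_n(P_n(\C))$ is. Thus everything reduces to the single assertion that the suspension functor is an equivalence on the $n$-excisive category $P_n(\C)$. This is the expected incarnation of the slogan that $P_n(\C)$ is a fixed point of the tower: the construction formally inverts $\Sigma$, and the shift identification $P_n(T_n(\C))\simeq P_n(\C)$ records that $P_n$ turns $\Sigma$ into an equivalence.

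The main obstacle is precisely to promote ``$\Sigma$ is inverted on $P_n(\C)$ in the colimit'' to the genuine statement ``$\Sigma\colon P_n(\C)\to T_n(P_n(\C))$ is an equivalence of $\infty$-categories.'' The formal colimit yields only that $\Sigma$ is a split monomorphism, the structure map supplying a retraction, which as the case $n=1$ of stabilization already shows is not automatically an equivalence. To close the gap I would produce an explicit right adjoint to $\Sigma\colon\C\to T_n(\C)$ --- a total-fibre/loops-type functor extracted from the punctured-cube description of $T_n(\C)$ --- and verify that its unit and counit become equivalences exactly when strongly cocartesian $(n+1)$-cubes in $\C$ are cartesian, i.e. exactly when $\C$ is $n$-excisive in Heuts' sense. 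Compact generation enters here to guarantee that this adjoint exists and that $T_n$ commutes with the relevant filtered colimits, so that both the shift identification and the adjunction are available; with the adjunction in hand the counit on $P_n(\C)$ is an equivalence by construction, yielding the required equivalence and completing the ``only if'' direction.
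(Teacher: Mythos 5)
Your opening reduction---using Theorem \ref{cubeiscogroup} to replace $coGrp_n(\C)$ by $T_n(\C)$, so that the theorem becomes ``$\C$ is $n$-excisive iff $\Sigma = L_n\colon \C\to T_n\C$ is an equivalence''---is exactly the paper's move; but at that point the paper simply invokes \cite[Proposition 3.16]{H}, which is that statement verbatim, and is done. You instead try to reprove that proposition from the weaker input that $P_n\C=\colim_k T_n^k\C$ is an $n$-excisive approximation. Your ``if'' direction (naturality of $\Sigma$ in $\cat^\omega$ plus two-out-of-three up the tower) is fine. The gap is in the ``only if'' direction, where, as you yourself note, everything reduces to the single claim that $\Sigma\colon P_n\C\to T_n(P_n\C)$ is an equivalence. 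That claim is precisely the nontrivial content of \cite[Propositions 3.15 and 3.16]{H} (equivalently, that $P_n\C$ is itself $n$-excisive, i.e.\ idempotency of the construction), and your plan for establishing it does not close the gap.

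Two things go wrong concretely. First, your proposed criterion---that the unit and counit of $L_n\dashv R_n$ are equivalences ``exactly when strongly cocartesian $(n+1)$-cubes in $\C$ are cartesian, i.e.\ exactly when $\C$ is $n$-excisive in Heuts' sense''---conflates two different conditions. ``Strongly cocartesian $(n+1)$-cubes are cartesian'' says that the identity \emph{functor} $1_\C$ is $n$-excisive, not that the \emph{$\infty$-category} $\C$ is $n$-excisive in Heuts' sense (every weak $n$-excisive approximation is an equivalence). The paper itself separates these: right after Theorem \ref{layer} it proves that $1_\C$ being $n$-excisive is equivalent only to $\Sigma\colon\C\to coGrp_n(\C)$ being \emph{fully faithful}. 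The conditions genuinely differ: for $n=1$, every pushout square of connective spectra is a pullback square, yet connective spectra are not stable. So your criterion can at best yield fully faithfulness of $\Sigma$ on $P_n\C$, never essential surjectivity, which is the heart of the matter. Second, the assertion that ``the counit on $P_n(\C)$ is an equivalence by construction'' is exactly the statement to be proven; the colimit construction by itself gives only the retraction you already flagged as insufficient. Upgrading it requires showing that $T_n$ commutes with the relevant filtered colimits in $\cat^\omega$ and running the shift identification $T_n(\colim_k T_n^k\C)\simeq\colim_k T_n^{k+1}\C\simeq P_n\C$ carefully---which is the substance of Heuts' proofs, asserted here rather than carried out. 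The fix is either to do that work, or simply to cite \cite[Proposition 3.16]{H} as the paper does.
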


Passing to the limit as $n$ goes to infinity of the Goodwillie tower, one obtains an $\infty$-category $P_\infty\C$ which best approximates $\C$ from the point of view of Goodwillie calculus.  Theorem \ref{layer} extends to this $\infty$-category to obtain an $A_\infty$ desuspension theorem, stated as Theorem \ref{limit}:

\begin{thm*}
Let $\C$ be a pointed, compactly generated $\infty$-category. The suspension functor induces an equivalence \[\Sigma: P_\infty\C\overset{\sim}{\longrightarrow} coGrp_\infty(P_\infty\C)\]
\end{thm*}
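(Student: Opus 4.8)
The plan is to realize $coGrp_\infty(P_\infty\C)$ as an iterated limit and then collapse that limit to its diagonal, where Theorem \ref{layer} applies directly. Recall that $P_\infty\C = \lim_m P_m\C$ is the limit of the Goodwillie tower, and that an $A_\infty$-cogroup structure is precisely a compatible family of $A_n$-structures, so that for any pointed $\cD$ with finite colimits one has $coGrp_\infty(\cD) \simeq \lim_n coGrp_n(\cD)$, the limit being taken along the forgetful functors $coGrp_{n+1}(\cD) \to coGrp_n(\cD)$. Since the suspension $\Sigma\colon \cD \to coGrp_\infty(\cD)$ is natural in $\cD$ and compatible with these forgetful functors, it suffices to understand the tower $\{coGrp_n(P_\infty\C)\}_n$ and to show that $\Sigma$ exhibits $P_\infty\C$ as its limit.

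First I would show that each $coGrp_n$ preserves the cofiltered limit defining $P_\infty\C$, i.e.\ $coGrp_n(P_\infty\C) \simeq \lim_m coGrp_n(P_m\C)$. Using the identification $coGrp_n(\cD) \simeq T_n(\cD)$ of Theorem \ref{cubeiscogroup}, the $\infty$-category $coGrp_n(\cD)$ is a full subcategory of a functor $\infty$-category $\Fun(K,\cD)$, for $K$ the relevant punctured cube, carved out by a cocartesianness condition on the cube. Functor $\infty$-categories preserve limits of $\infty$-categories, and the transition functors $P_{m+1}\C \to P_m\C$ preserve the finite colimits appearing in that condition, so the condition is detected levelwise and $coGrp_n$ commutes with the limit. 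Combining this with $coGrp_\infty(P_\infty\C) = \lim_n coGrp_n(P_\infty\C)$ and interchanging limits gives
\[
coGrp_\infty(P_\infty\C) \;\simeq\; \lim_{n}\lim_{m} coGrp_n(P_m\C) \;\simeq\; \lim_{(n,m)\in\mathbb{N}^{op}\times\mathbb{N}^{op}} coGrp_n(P_m\C).
\]

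Next I would invoke cofinality: the diagonal $\mathbb{N}^{op} \to \mathbb{N}^{op}\times\mathbb{N}^{op}$ is initial, since for every $(n,m)$ the relevant slice is the nonempty directed poset $\{k \ge \max(n,m)\}$ and hence weakly contractible. The double limit therefore collapses to $\lim_k coGrp_k(P_k\C)$. Each $P_k\C$ is $k$-excisive and, as $\C$ is compactly generated, so is $P_k\C$; thus Theorem \ref{layer} furnishes an equivalence $\Sigma\colon P_k\C \overset{\sim}{\longrightarrow} coGrp_k(P_k\C)$. These equivalences are natural in $k$ along the tower maps, being instances of $\Sigma$, so passing to the limit yields
\[
coGrp_\infty(P_\infty\C) \;\simeq\; \lim_k coGrp_k(P_k\C) \;\simeq\; \lim_k P_k\C \;=\; P_\infty\C,
\]
and unwinding the identifications shows the composite is inverse to the map induced by $\Sigma$.

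The two steps I expect to be most delicate are the commutation of $coGrp_n$ with the limit and the verification that the final abstract equivalence is genuinely realized by the suspension. The former rests on the tower maps $P_{m+1}\C \to P_m\C$ being suitably exact, so that the cocartesian-cube condition defining $T_n$ is preserved; this I would extract from their construction as colimit-preserving localizations, along with the observation that $P_\infty\C$ itself inherits finite colimits computed levelwise. The latter requires chasing the naturality of $\Sigma$ in the category variable through the cofinality reduction, so as to ensure that the diagonal equivalences of Theorem \ref{layer} assemble into the comparison map $\Sigma\colon P_\infty\C \to coGrp_\infty(P_\infty\C)$ rather than merely into some equivalence between the two $\infty$-categories.
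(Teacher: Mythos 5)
Your proposal is correct and follows essentially the same route as the paper: the paper likewise writes $coGrp_\infty(P_\infty\C)$ as the double limit $\lim_{k,n} coGrp_k(P_n\C)$ (via its Proposition \ref{createcomonoid} and Lemma \ref{filterinfinity}, which play exactly the roles of your commutation step and your $coGrp_\infty \simeq \lim_n coGrp_n$ step), collapses to the diagonal by cofinality of $\Delta:\mathbb{N}\to\mathbb{N}\times\mathbb{N}$, and feeds in the levelwise equivalences $\Sigma: P_k\C \overset{\sim}{\longrightarrow} coGrp_k(P_k\C)$ from Theorem \ref{layer}. The only difference is direction of assembly: the paper starts from the tower of levelwise equivalences and uses joint creation of colimits by the projections (Lemma \ref{jointcolimits}, from Riehl--Verity) to identify the resulting functor with the suspension --- precisely the naturality check you flag as the delicate final step.
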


These results together motivate a philosophical claim that Goodwillie calculus attempts to reconstruct $\infty$-categories and functors via the $A_\infty$ cogroup structure on the suspension functor.

As an application, in this new light we may translate certain desuspension problems into problems of the convergence of the Goodwillie tower. For example, from knowing that the Goodwillie tower convergences on simply connected spaces, we are able to recover the following theorem of Klein-Schwänzl-Vogt, which we state as Theorem \ref{ksv}

\begin{thm*}
The suspension functor induces an equivalence of $\infty$-cat\-e\-gories
\[ \Sigma: \cS_*^{\ge 2} \overset{\sim}{\longrightarrow} coGrp_{\infty}(\cS_*^{\ge 3}) \]
In particular every 2-connected $A_{\infty}$-comonoid admits a desuspension to a 1-connected space.
\end{thm*}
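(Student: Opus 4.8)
The plan is to obtain this as a corollary of Theorem \ref{limit} applied to $\C = \cS_*$, using convergence of the Goodwillie tower on simply connected spaces to trade the limit category $P_\infty\cS_*$ for $\cS_*$ itself on the relevant connected subcategories. First I would specialize Theorem \ref{limit} to the pointed, compactly generated $\infty$-category $\C = \cS_*$, producing an equivalence
\[ \Sigma: P_\infty\cS_* \overset{\sim}{\longrightarrow} coGrp_\infty(P_\infty\cS_*). \]
The content that remains is then purely one of identifying the two sides with honest categories of spaces.

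The second step is to invoke convergence. Goodwillie's theorem that the Taylor tower of the identity functor converges on simply connected spaces has, in Heuts' framework, the consequence that the canonical localization $\cS_* \to P_\infty\cS_*$ restricts to a fully faithful embedding on the full subcategory $\cS_*^{\ge 2}$ of simply connected spaces, with essential image the simply connected objects of $P_\infty\cS_*$. Since $2$-connected implies simply connected, the same embedding identifies $\cS_*^{\ge 3}$ with the $2$-connected objects of $P_\infty\cS_*$.

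Third, I would track connectivity through both functors in the displayed equivalence. Suspension raises connectivity by one, so $\Sigma$ carries $\cS_*^{\ge 2}$ into cogroup objects whose underlying object lies in $\cS_*^{\ge 3}$, that is, into $coGrp_\infty(\cS_*^{\ge 3})$; conversely the desuspension furnished by Theorem \ref{limit} lowers connectivity by one, so it carries $coGrp_\infty(\cS_*^{\ge 3})$ back into $\cS_*^{\ge 2}$. Because the forgetful functor out of $coGrp_\infty$ detects the connectivity of the underlying object, these restrictions are mutually inverse, and transporting the equivalence of Theorem \ref{limit} along the identifications of the second step yields the desired
\[ \Sigma: \cS_*^{\ge 2} \overset{\sim}{\longrightarrow} coGrp_\infty(\cS_*^{\ge 3}). \]
The final assertion is then just the statement that this functor is essentially surjective: any cogroup-like $A_\infty$-space that is $2$-connected is equivalent to $\Sigma X$ for some, necessarily $1$-connected, space $X$.

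The main obstacle I expect is the connectivity bookkeeping of the second and third steps: one must show not merely that $\Sigma$ and its inverse shift connectivity by one, but that the equivalence of Theorem \ref{limit} is compatible with the connectivity filtration in the strong sense that a desuspension of a $2$-connected $A_\infty$-cogroup is automatically $1$-connected. Equivalently, one must verify that the convergence identification of $\cS_*^{\ge 2}$ and $\cS_*^{\ge 3}$ with connected subcategories of $P_\infty\cS_*$ is genuinely compatible with suspension, so that restricting the global equivalence to these subcategories produces an equivalence rather than merely a fully faithful functor. This is where Goodwillie's convergence estimate, rather than the formal properties of the tower, does the real work.
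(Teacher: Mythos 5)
Your proposal follows essentially the same route as the paper: specialize Theorem \ref{limit} to $\cS_*$, invoke Heuts' convergence theorem \cite[Theorem 1.3]{H} to identify $\cS_*^{\ge n}$ with $P_\infty\cS_*^{\ge n}$ for $n\ge 2$, and restrict along the connectivity filtration --- the paper packages your connectivity bookkeeping as Corollary \ref{limitspaces} (stated for the subcategories $P_\infty\cS_*^{\ge n}$ defined via Heuts' Tate coalgebra model) and then concludes by two-out-of-three in a commutative square. The only ingredient you omit is the paper's remark that a $2$-connected $A_\infty$-comonoid is automatically cogroup-like, which is what upgrades the equivalence onto $coGrp_\infty(\cS_*^{\ge 3})$ to the ``in particular'' assertion about arbitrary $A_\infty$-comonoids rather than just cogroup-like ones.
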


A brief outline of this article is as follows:
\begin{enumerate}
    \item In Section \ref{equivofmodels} we develop the punctured cube model of $A_n$-cogroup objects and prove Theorem \ref{cubeiscogroup} which states that our model is equivalent to the existing model developed by Lurie in \cite[Section 4.1]{HA}.
    \item In Section \ref{goodwillie} we prove Theorem \ref{layer} and Theorem \ref{limit} which establishes the connection between Goodwillie calculus and co\-group objects.
    \item In Section \ref{spaces} we apply Theorem \ref{limit} to the special case of pointed spaces in order to recover Theorem \ref{ksv}: that all 2-connected $A_\infty$-cogroup objects admit a desuspension.
\end{enumerate}

\textbf{Acknowledgements.}
The author is grateful to his advisor Emily Riehl for all of her support during the completion of this project. This work was supported by the National Science Foundation grant DMS-1652600 and was completed during a visit to Utrecht University whom the author thanks for their generous hospitality. The idea that special punctured cubical diagrams could give a model for cogroup objects originated from Jacob Lurie. Finally, the author is immensely grateful to Gijs Heuts for suggesting this project as well as for providing considerable feedback, mentorship, and guidance towards it completion.

\section{Punctured Cubes as coGroups}\label{equivofmodels}
In this section we develop a new model for $A_n$-cogroup objects in an $\infty$-category in terms of certain punctured cubical diagrams, and prove that this model is equivalent to the model dual to the $A_n$-group model developed in \cite[Section 4.1]{HA}. The dual story holds for group objects as well, but we will not need this. We begin by recalling the established formalism of $A_n$-comonoid objects.

\begin{defn}
Let $\Delta^{inj}$ be the wide subcategory of the category $\Delta$ containing only the injections.  Let $\Delta_n \hookrightarrow \Delta^{inj}$ be the full subcategory on the objects $[0], [1],..., [n]$.
\end{defn}
\begin{rem}
Throughout this paper when we discuss maps in $\Delta$ we only ever consider the injections.
\end{rem}

\begin{defn}\label{comonoiddef}
Let $\C$ be a pointed $\infty$-category with finite colimits. A (unital) \textit{$A_n$-comonoid object} in $C$ is a functor $A: \Delta_n\rightarrow \C$ satisfying the conditions
\begin{enumerate}
    \item $A[0]$ is a zero object. 
    \item For each $t\le n$ the maps $p_k:[1]\rightarrow [t]$ sending 0 to 0 and 1 to $k$ induce an equivalence \[ \coprod_{i=1}^n A[1]\xrightarrow{\sim} A[t] \]
    \item The following composites are the identity on $A[1]$  \[ A[1]\xrightarrow{\partial_{1}} A[2]\simeq A[1]\amalg A[1] \rightarrow A[1]\amalg 0 \simeq A[1]  \] \[ A[1] \xrightarrow{\partial_{1}} A[2]\simeq A[1]\amalg A[1] \rightarrow  0 \amalg A[1] \simeq A[1]  \]
    where the right hand arrow is the coproduct of the identity of $A[1]$ and the zero map.
\end{enumerate}
Let $coMon_n(\C)\hookrightarrow Fun(\Delta_n, \C)$ denote the full subcategory spanned by the $A_n$-comonoid objects in $\C$.
\end{defn}

\begin{rem}
For $n=0, 1$ conditions $(2)$ and $(3)$ do not make sense.  By convention an $A_0$-comonoid object is a zero object, and an $A_1$-comonoid object is simply an object of $\C$. The second condition is dual to the usual Segal condition for a monoid, the third condition says that the comultiplication induced by the Segal conditions is counital. Because we are in a pointed setting we have only one choice of counit, greatly simplifying the data needed to specify counitality.
\end{rem}

\begin{defn}\label{cogroup}
An $A_n$-comonoid object $A$ in $\C$ is cogroup-like if it satisfies conditions 1 and 3 above, and the stronger condition $(2')$.
\begin{itemize}
    \item[(2')] For each $t\le n$ and partition of $[n]$ into subsets $S, S'$ agreeing on at most one element the diagram
    \begin{center}
        \begin{tikzcd}
        A[S\cap S']\ar[r]\ar[d]    &   A[S]\ar[d]    \\
        A[S']\ar[r]       & A[n]
        \end{tikzcd}
    \end{center}
    is a pushout square in $\C$.
\end{itemize}
In particular condition $(2')$ implies the usual Segal condition $(2)$.  We will denote by $coGrp_n(\C)$ the $\infty$-category of cogroup-like $A_n$-comonoids in $\C$.  We will refer to these objects as \textit{$A_n$-cogroup objects.}
\end{defn}

It is worth mentioning that the cogroup-like condition (2') can be replaced with a more natural condition involving all pushout squares in $\Delta_n$.
\begin{lem}\label{colimsegal}
A functor  $A: \Delta_n\rightarrow \C$ satisfies condition $(2')$ of Definition \ref{cogroup} if and only if it satisfies the following condition
\begin{itemize}
    \item[(2'')] For each $t\le n$ and any two subsets $S, S'$ of $[t]$ the diagram
        \begin{center}
            \begin{tikzcd}
            A[S]\cap A[S']\ar[d]\ar[r] & A[S]\ar[d]\\
            A[S']\ar[r] & A[S\cup S']
            \end{tikzcd}
        \end{center}
        is a pushout square in $\C$.
\end{itemize}
\end{lem}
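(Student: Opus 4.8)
The plan is to derive the general intersection–union squares $(2'')$ from the restricted squares $(2')$ by extracting from $(2')$ a coproduct decomposition of $A[S]$ valid for \emph{any} chosen basepoint of $S$, and then recognizing the squares of $(2'')$ as images of purely set-theoretic pushouts. The direction $(2'')\Rightarrow(2')$ is immediate: the $(2')$-squares are precisely the $(2'')$-squares with $S\cup S'=[t]$, so I focus on the converse.

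For $(2')\Rightarrow(2'')$ I would first establish a decomposition lemma: assuming $(2')$, for every nonempty $S\subseteq[t]$ and every $c\in S$, the chords $\{c,s\}\hookrightarrow S$ with $s\in S\setminus\{c\}$ induce an equivalence $\coprod_{s\in S\setminus\{c\}}A[1]\xrightarrow{\sim}A[S]$. I would argue by induction on $|S|$: the cases $|S|\le 2$ follow from condition $(1)$ (a singleton gives the zero object $A[0]$) together with the trivial one-chord case. For the inductive step, fix $c$ and choose any other $s\in S$; applying the $(2')$-square inside $[S]$ to the splitting $S=\{c,s\}\cup(S\setminus\{s\})$, whose overlap is exactly $\{c\}$, and using $A[\{c\}]\simeq 0$, exhibits $A[S]\simeq A[\{c,s\}]\amalg A[S\setminus\{s\}]$. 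The inductive hypothesis for $S\setminus\{s\}$ with the same basepoint $c$ then completes the decomposition, and functoriality of $A$ identifies the summand inclusions with the asserted chords.

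With this in hand $(2'')$ becomes formal. Given $S,S'$ with a common element $c\in S\cap S'$, decompose all four corners by chords from $c$. Functoriality identifies each map of the square with the coproduct inclusion induced by the corresponding inclusion of indexing sets, and since $(S\cap S')\setminus\{c\}=(S\setminus c)\cap(S'\setminus c)$ and $(S\cup S')\setminus\{c\}=(S\setminus c)\cup(S'\setminus c)$, the whole square is the image of the set pushout $(S\setminus c)\cap(S'\setminus c)\to S\setminus c,\ S'\setminus c\to(S\setminus c)\cup(S'\setminus c)$ under the copower functor $X\mapsto\coprod_{X}A[1]$. As this functor preserves pushouts, the square is a pushout in $\C$.

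The crux of the argument — and essentially the only place the hypothesis is used — is the decomposition lemma, and in particular the fact that it holds for chords based at an \emph{arbitrary} $c\in S$ rather than the minimum of $S$. This movable basepoint is exactly what allows the choice of a common $c\in S\cap S'$ in the final step, and it is made available precisely because $(2')$ permits splitting off the chord $\{c,s\}$ for any $s$, not only for adjacent elements. I would also flag that one needs $S\cap S'\neq\emptyset$ for the statement (equivalently, that the subsets share a basepoint): for disjoint $S,S'$ the square would force $A[S\cup S']\simeq A[S]\amalg A[S']$, which fails already for $S=\{0\},S'=\{1\}$ in $[1]$.
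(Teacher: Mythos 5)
There is a genuine gap: your argument for $(2')\Rightarrow(2'')$ relies on condition $(1)$ of Definition \ref{cogroup}, which is not a hypothesis of Lemma \ref{colimsegal}. The lemma asserts the equivalence $(2')\Leftrightarrow(2'')$ for an \emph{arbitrary} functor $A:\Delta_n\to\C$, whereas your decomposition lemma needs $A[\{c\}]\simeq 0$ in two places: in the base case $|S|=1$ (the empty-coproduct claim, which also enters Step 2 whenever $S\cap S'=\{c\}$ is a singleton), and in the inductive step, where the pushout $A[S]\simeq A[\{c,s\}]\amalg_{A[\{c\}]}A[S\setminus\{s\}]$ supplied by $(2')$ becomes a coproduct only because the corner $A[\{c\}]$ is a zero object. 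Without condition $(1)$ the decomposition is false, not merely unproved: a constant functor with identity structure maps satisfies $(2')$ (a square of identities is a pushout), yet for it the chord map $\coprod_{s\in S\setminus\{c\}}A[1]\to A[S]$ is a fold map, not an equivalence. So what you have actually proved is that $(1)$ and $(2')$ together imply $(2'')$; this covers the way the lemma is used in the paper (it is only ever applied to cogroup objects, which satisfy $(1)$), but it is not a proof of the lemma as stated.

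The paper's own proof needs neither condition $(1)$ nor induction: given $S,S'$ with $|S\cap S'|\ge 2$, pick $i\in S\cap S'$ and set $T=S'\setminus(S\cap S'\setminus\{i\})$. The pairs $(S\cap S',T)$ and $(S,T)$ each intersect in exactly $\{i\}$, with unions $S'$ and $S\cup S'$ respectively, so $(2')$ makes both of the corresponding squares pushouts; since the square for $(S\cap S',T)$ pastes with the desired $(2'')$-square for $(S,S')$ to give the square for $(S,T)$, cancellation of pushouts finishes the proof. To salvage your approach you would have to either add $(1)$ as a hypothesis, or work with pushouts over $A[\{c\}]$ instead of coproducts throughout, at which point you essentially recover the paper's pasting argument. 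Two smaller remarks: the functor $X\mapsto\coprod_X A[1]$ does not preserve arbitrary pushouts of sets (the set pushout $\ast\sqcup_{\ast\amalg\ast}\ast$ is a point, but $A[1]\sqcup_{A[1]\amalg A[1]}A[1]$ is generally not $A[1]$), only pushouts along injections such as your intersection-union squares, so that claim should be narrowed; on the other hand, your closing observation that both conditions implicitly require $S\cap S'\neq\emptyset$ is correct, since $A[\emptyset]$ is not even defined on $\Delta_n$.
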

\begin{proof}
This new condition is clearly stronger than $(2')$, so we will show that condition $(2')$ suffices. Consider a pushout square in $\Delta_n$
\begin{center}
    \begin{tikzcd}
    S\cap S'\ar[d]\ar[r] & S\ar[d]\\
    S'\ar[r] & S\cup S'
    \end{tikzcd}
\end{center}
If $S$ and $S'$ intersect in one element then we are done by condition $(2')$.  Otherwise they intersect in at least two objects, call one of them $i$ so that $S\cap S' - \{i\}$ is nonempty.  Consider the diagram

\begin{center}
    \begin{tikzcd}
    A[\{i\}]\ar[r]\ar[d]    &  A[S\cap S']\ar[r]\ar[d]  &  A[S]\ar[d] \\
    A[(S' - \{S\cap S' -\{i\}\})]\ar[r] & A[S']\ar[r] & A[S\cup S']
    \end{tikzcd}
\end{center}
By condition $(2')$ the outside and left hand squares are pushouts, so that the right hand square is also a pushout.
\end{proof}

\begin{rem}
The property of being cogroup like is detected already at the level of the homotopy category, see for example \cite[Proposition 2.3]{GGN} for a similar discussion, and does not require us to specify any additional structure, only properties.
\end{rem}

Now we will give our alternative model of $A_n$-cogroup objects in terms of punctured cubes.

\begin{defn}
Let $\square[n+1] = P(\{0,\dots,n\})$ denote the powerset on $n+1$ elements. An \textit{$(n+1)$-cube} in an $\infty$-category $\C$ is a functor \[\cX:\square[n+1]\rightarrow \C\]
\end{defn}

\begin{defn}
Let $\square[n+1]_0 = P_0(\{0,\dots, n\})$ denote the nerve of the powerset on $n+1$ elements except that we remove the empty set. A \textit{punctured $(n+1)$-cube} in $\C$ is a functor \[\cX:\square[n+1]_0\rightarrow \C\] In words it is an $(n+1)$-cube missing the initial vertex.
\end{defn}

\begin{defn}\label{puncturedcube}
A punctured $(n+1)$-cube $\cX:P_0(\{0,\dots,n\})\rightarrow \C$ is called \textit{special} if 
\begin{enumerate}
    \item For every one element subset $s\in P(\{0,\dots,n\})$ there is an equivalence $\cX(s)\simeq 0$.
    \item The cube is strongly cocartesian: every face which exists in the cube is a pushout.
\end{enumerate}
Following \cite{H} we let $T_n\C$ denote the $\infty$-category of special punctured $(n+1)$-cubes (note the change in index).
\end{defn}

We will find it convenient to occasionally work with a different description of $P_0(\{1,\dots,n\})$

\begin{lem}\label{sliceiso}
There is an isomorphism of categories \[ \square[n+1]_0 \simeq \Delta^{inj}/[n]\] between the punctured $(n+1)$-cube and the slice category of $\Delta^{inj}$ over the object $[n]$.
\end{lem}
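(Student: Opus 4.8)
The plan is to exhibit the isomorphism concretely via the image of an injection. The essential dictionary is that an order-preserving injection $\phi\colon[k]\to[n]$ is determined completely by its image $\mathrm{im}(\phi)$, a nonempty subset of $\{0,\dots,n\}$, and that conversely every nonempty subset $S$ is the image of a unique order-preserving injection, namely the order isomorphism $\iota_S$ of $[\#S-1]$ onto $S$. I would use this to define a functor $F\colon \Delta^{inj}/[n]\to\square[n+1]_0$ sending an object $\phi\colon[k]\to[n]$ to $\mathrm{im}(\phi)\in P_0(\{0,\dots,n\})$, together with a candidate inverse $G$ sending a subset $S$ to $\iota_S$.

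First I would record that both categories are posets, so that no coherence data needs to be tracked. The target $\square[n+1]_0=P_0(\{0,\dots,n\})$ is a poset by definition. For the slice, a morphism $\phi\to\psi$ is an order-preserving injection $f$ with $\psi f=\phi$; since the structure map $\psi$ is a monomorphism, such an $f$ is unique whenever it exists, so $\Delta^{inj}/[n]$ is thin. It therefore suffices to check that $F$ is an order-preserving bijection on objects whose inverse is also order-preserving.

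On objects, bijectivity of $F$ is exactly the dictionary above: distinct order-preserving injections into $[n]$ have distinct images, since two such maps with the same image must have sources of the same cardinality and hence coincide, while every nonempty subset arises as some image. The assignment $G$ then supplies a two-sided inverse via $\phi\mapsto\mathrm{im}(\phi)\mapsto\iota_{\mathrm{im}(\phi)}=\phi$ and $S\mapsto\iota_S\mapsto\mathrm{im}(\iota_S)=S$. For the order relation, I would show there is a morphism $\phi\to\psi$ in the slice if and only if $\mathrm{im}(\phi)\subseteq\mathrm{im}(\psi)$: one direction is immediate since $\mathrm{im}(\phi)=\mathrm{im}(\psi f)\subseteq\mathrm{im}(\psi)$, and for the converse, whenever $\mathrm{im}(\phi)\subseteq\mathrm{im}(\psi)$ the composite $\psi^{-1}\circ\phi$, using that $\psi$ restricts to an order isomorphism onto its image, is the required factorization. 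This matches precisely the unique morphism $\mathrm{im}(\phi)\subseteq\mathrm{im}(\psi)$ in $\square[n+1]_0$, so both $F$ and $G$ preserve the order and are mutually inverse functors.

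There is no serious obstacle here; the content is entirely the bookkeeping that identifies order-preserving injections with their images. The one point that deserves genuine care is the uniqueness of factorizations, which rests on the structure maps in the slice being monomorphisms. This is what makes both sides thin and what lets me reduce the entire statement to an order-isomorphism of posets rather than an equivalence requiring higher coherences.
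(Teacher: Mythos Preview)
Your argument is correct and is precisely the approach the paper takes: the paper's proof consists of the single sentence that an injection $[k]\to[n]$ designates a unique subset of $[n]$, which is exactly your image dictionary. You have simply filled in the bookkeeping details the paper leaves implicit.
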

\begin{proof}
The idea is that an injection $[k]\to [n]$ designates a unique subset of $[n]$, and this allows us to pass between the two categories.
\end{proof}

Under the above isomorphism there is a natural functor \[\pi_n: Fun(\Delta_n, \C)\to Fun(\square[n+1]_0, \C) \] given by restricting along the projection $\pi_n:\Delta^{inj}/[n]\rightarrow \Delta_n$, which by Lemma \ref{colimsegal} restricts to a functor \[\pi_n: coGrp_n(\C)\rightarrow T_n(\C)\] 

\begin{ex}
The functor $Fun(\Delta_2, \C)\to Fun(\square[3]_0, \C)$ blows up a  truncated cosimplicial object \[\begin{tikzcd}
A[0] \arrow[r, shift left, "\partial_0"]\ar[r, "\partial_1"'] & {A[1]} \arrow[r, "\partial_1" description] \arrow[r, "\partial_2"', shift right] \arrow[r, "\partial_0", shift left] & {A[2]}
\end{tikzcd}\] into the punctured 3-cube

\begin{center}
\begin{tikzcd}[row sep = 5pt, column sep = 5pt]
                        
                                                          &                                                          & {A[0]} \arrow[dd, near start, "\partial_0"] \arrow[rd,near start, "\partial_1"] &                                 \\
                                                          & {A[0]} \arrow[rr, near start,"\partial_1"] \arrow[dd, near start,"\partial_0"] &                                                          & {A[1]} \arrow[dd, "\partial_1"] \\
{A[0]} \arrow[rr, near start,"\partial_1"] \arrow[rd, near start,"\partial_0"'] &                                                          & {A[1]} \arrow[rd, "\partial_2"]                          &                                 \\
                                                          & {A[1]} \arrow[rr, "\partial_0"]                          &                                                          & {A[2]}                         
\end{tikzcd}
\end{center}


\end{ex}

For the remainder of this section we will establish our first main result, Theorem \ref{layer}, which states that the functor $\pi_n$ provides an equivalence of models of $A_n$-cogroups. Because the proof is not exactly enlightening, we give an explicit description of the $n=2$ case which contains our key intuition.

\begin{ex}[Theorem \ref{cubeiscogroup} for $n=2$]\label{case3} The data of a special punctured 3-cube is a diagram
\begin{center}
\begin{tikzcd}[column sep = 5pt, row sep=5pt]
                        &                         & 0 \arrow[rd] \arrow[dd] &              \\
                        & 0 \arrow[rr, crossing over] \arrow[dd, crossing over] &                         & A \arrow[dd] \\
0 \arrow[rd] \arrow[rr] &                         & A'' \arrow[rd]          &              \\
                        & A' \arrow[rr]           &                         & A'''        
\end{tikzcd}
\end{center}
with every face a pushout.  Consider the diagram formed by the bottom and right square faces.
\begin{center}
\begin{tikzcd}
0 \arrow[d] \arrow[r]                       & A'' \arrow[d, "a"] & 0 \arrow[d] \arrow[l]                      \\
A' \arrow[d, no head, equals] \arrow[r] & A''' \arrow[d]     & A \arrow[d, no head, equals] \arrow[l] \\
A' \arrow[r, dashed, "\sim"]                        & Cof(a)             & A \arrow[l, dashed, "\sim"']                      
\end{tikzcd}
\end{center}

Because the squares are pushout, taking cofibers yields the equivalences as displayed, giving an equivalence $A\simeq A'$.  By symmetry we have also $A\simeq A''$.  We can identify each non zero vertex with $A$ and $A\vee A$, as well as the bottom face with the coproduct inclusions.
\begin{center}
    \begin{tikzcd}[column sep = 5pt, row sep=5pt]
                        &                         & 0 \arrow[rd] \arrow[dd] &                   \\
                        & 0 \arrow[rr, crossing over] \arrow[dd, crossing over] &                         & A \arrow[dd, "m"] \\
0 \arrow[rd] \arrow[rr] &                         & A \arrow[rd, "i_1"]     &                   \\
                        & A \arrow[rr, "i_2"']    &                         & A\vee A          
\end{tikzcd}
\end{center}
Note that once we substitute these equivalences, we are left with the interesting right hand vertical map $m:A\to A\vee A$. Being strong cocartesian tells us that $m$ satisfies counital properties as demonstrated below
\begin{center}
\begin{tikzcd}
0 \arrow[d] \arrow[r]                           & A \arrow[d, "m"] \arrow[rdd, no head, Rightarrow, bend left] &   & 0 \arrow[d] \arrow[r]                           & A \arrow[d, "m"] \arrow[rdd, no head, Rightarrow, bend left] &   \\
A \arrow[r, "i_2"] \arrow[rrd, "0", bend right] & A\vee A \arrow[rd]                                           &   & A \arrow[r, "i_1"] \arrow[rrd, "0", bend right] & A\vee A \arrow[rd]                                           &   \\
                                                &                                                              & A &                                                 &                                                              & A
\end{tikzcd}
\end{center}
Where the unique induced maps on the above left and right are respectively $A\vee A\xrightarrow{A \vee \eta} A$ and $A\vee A\xrightarrow{\eta \vee A} A$.

By Definition \ref{comonoiddef} this gives the object \begin{tikzcd}
0 \arrow[r] & A \arrow[r, shift right] \arrow[r] \arrow[r, shift left] & A\vee A
\end{tikzcd}, obtained by collapsing the cube, the structure of an $A_2$-comonoid. Furthermore, because every face is a pushout, this comonoid object is cogroup like.

Now suppose we have a morphism in $T_2\C$ between the two punctured cubes in which we've made the above identifications
\begin{center}
    \begin{tikzcd}[column sep = 5pt, row sep=5pt]
                        &                         & 0 \arrow[rd] \arrow[dd] &                     &  &                         &                         & 0 \arrow[dd] \arrow[rd] &                     \\
                        & 0 \arrow[rr, crossing over] \arrow[dd, crossing over] &                         & A \arrow[dd, "m_A"] &  &                         & 0 \arrow[dd, crossing over] \arrow[rr, crossing over] &                         & B \arrow[dd, "m_B"] \\
0 \arrow[rd] \arrow[rr] &                         & A \arrow[rd, "i_1"]     &                     &  & 0 \arrow[rd] \arrow[rr] &                         & B \arrow[rd, "i_1"]     &                     \\
                        & A \arrow[rr, "i_2"']    &                         & A\vee A             &  &                         & B \arrow[rr, "i_2"']    &                         & B\vee B            
\end{tikzcd}
\end{center}

The data of this morphism consists of three maps $f_1, f_2, f_3:A\to B$ as well as a map $f_4:A\vee A\to B\vee B$ which makes the diagram formed by these maps and the two cubes commute.  Because the bottom squares of each cube are pushouts we have $f_4 \simeq f_1\vee f_2$
\begin{center}
    \begin{tikzcd}
0 \arrow[d] \arrow[r]                                 & A \arrow[d, "i_2"] \arrow[rdd, "i_2f_1", bend left] &         \\
A \arrow[r, "i_1"] \arrow[rrd, "i_1f_2"', bend right] & A\vee A \arrow[rd, "f_4" description]               &         \\
                                                      &                                                     & B\vee B
\end{tikzcd}
\end{center}
By one half of counitality we obtain an equivalence $f_3 \simeq f_1$ as shown below
\begin{center}
\begin{tikzcd}
A \arrow[r, "f_3"] \arrow[d, "m_A"] \arrow[dd, no head, equals, bend right = 1.7cm] & B \arrow[d, "m_B"'] \arrow[dd, no head, equals, bend left = 1.7cm] &   & A \arrow[r, "f_3"] \arrow[d, no head, equals] & B \arrow[d, equals] \\
A\vee A \arrow[r, "f_1\vee f_3"] \arrow[d]                                      & B\vee B \arrow[d]                                              & = & A \arrow[r, "f_1"]                                & B                       \\
A \arrow[r, "f_1"]                                                              & B                                                              &   &                                                   &                        
\end{tikzcd}
\end{center}
Likewise we have $f_3 \simeq f_2$.  Let $f$ denote any map equivalent to one of $f_1, f_2, f_3$.  The morphism of punctured cubes gives a morphism of $A_2$-cogroup objects
\begin{center}
    \begin{tikzcd}
0 \arrow[r] & A \arrow[r, shift right] \arrow[r] \arrow[r, shift left] \arrow[d, "f" ] & A\vee A \arrow[d, "f\vee f"] \\
0 \arrow[r] & B \arrow[r, shift right] \arrow[r] \arrow[r, shift left]                            & B\vee B                                 
\end{tikzcd}
\end{center}

Similarily we can show in general that a $k$-simplex in $T_n\C$ gives rise to a $k$-simplex in $coGrp_2(\C)$ in an essentially unique way, so that there is an equivalence $T_2\C\simeq coGrp_2(\C)$.
\end{ex}

We will prove Theorem \ref{cubeiscogroup} using induction by considering the process of building an $A_n$-cogroup object out of an $A_{n-1}$-cogroup object.  Lurie tackles this question in \cite[Section 4.1]{HA} by introducing an intermediate object, an $A_n$-precogroup object, which adds no additional information to an $A_{n-1}$-cogroup object.

\begin{defn}
Let $\Delta_n^0$ be the largest simplicial subset of $\Delta_n$ which does not include any simplex which includes the edge $e:[1]\to [n]$ as a face.
\end{defn}

\begin{defn}
 An \textit{$A_n$-precogroup object} of $\C$ is a functor $A^0:\Delta_n^0\to \C$ which satisfies the conditions  $(1)$ and $(2')$ of Definition \ref{cogroup}, when they make sense, and additionally satisfies condition $(3)$ if $n > 2$.  We denote the $\infty$-category of $A_n$-precogroup objects in $\C$ by $coGrp_n^{pre}(\C)$.
\end{defn}

\begin{thm*}\cite[Theorem 4.1.5.8]{HA}
The inclusion $\Delta_{n-1}\hookrightarrow \Delta^0_n$ induces a trivial fibration 
\[coGrp_n^{pre}(\C)\overset{\sim}{\longrightarrow} coGrp_{n-1}(\C)  \]
\end{thm*}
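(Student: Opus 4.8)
The plan is to recast the trivial-fibration claim as an instance of relative left Kan extension, dualizing Lurie's treatment of the monoid case. Write $i : \Delta_{n-1} \hookrightarrow \Delta_n^0$ for the inclusion. The only vertex of $\Delta_n^0$ not already in $\Delta_{n-1}$ is $[n]$, and the simplices of $\Delta_n^0$ touching $[n]$ are exactly the injections into $[n]$ and their chains, with the single edge $e : [1] \to [n]$ (and every simplex having $e$ as a face) deleted. First I would identify $coGrp_n^{pre}(\C)$ with the full subcategory of $\Fun(\Delta_n^0, \C)$ spanned by those $A^0$ whose restriction $i^*A^0$ is an $A_{n-1}$-cogroup and which are themselves pointwise left Kan extensions of $i^*A^0$ along $i$. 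Granting this, the theorem is formal: whenever $\C$ admits the relevant colimits, restriction along $i$ from the subcategory of left Kan extensions down to $\Fun(\Delta_{n-1}, \C)$ is a trivial fibration, and intersecting with the cogroup conditions exhibits $coGrp_n^{pre}(\C) \to coGrp_{n-1}(\C)$ as the desired trivial fibration.

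The substance of the argument is therefore the equivalence, for functors on $\Delta_n^0$ restricting to an $A_{n-1}$-cogroup, between the precogroup conditions $(1)$, $(2')$ and (for $n > 2$) $(3)$ and the property of being a pointwise left Kan extension from $\Delta_{n-1}$. Since $[n]$ is the only new vertex, such an extension is determined by its value there, computed as the colimit of $i^*A^0$ over the comma $\infty$-category of objects $[j] \to [n]$ with $j \le n-1$ that survive in $\Delta_n^0$. Viewing these objects as the proper nonempty subsets of $\{0,\dots,n\}$ --- in the spirit of the identification of Lemma \ref{sliceiso} --- exhibits this comma category as the corner of a cube with the vertex corresponding to $e$ removed, so that the colimit it computes is precisely the iterated pushout demanded by condition $(2')$, equivalently $(2'')$ by Lemma \ref{colimsegal}. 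Condition $(1)$ is the base case sending singletons to zero objects, while for $n > 2$ condition $(3)$ concerns only the edge $\partial_1 : [1] \to [2]$ lying in $\Delta_{n-1}$ and is thus already recorded by the restriction; this is exactly why the definition imposes $(3)$ only when $n > 2$.

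The step I expect to be the main obstacle is the combinatorial heart just described: verifying that deleting precisely the edge $e$, and nothing more, makes the left Kan extension at $[n]$ both exist and coincide with the strongly cocartesian pushout structure of $(2')$. Concretely one must check that the surviving comma category has the asserted cubical shape, so that its colimit is the correct coproduct of copies of $A^0[1]$, and, dually, confirm that reinstating $e$ would impose genuinely new comultiplication data --- this being the conceptual content of ``the precogroup adds no information to the $(n-1)$-cogroup.'' Once the pointwise Kan extension criterion is pinned down, the passage to a trivial fibration via the general criterion for relative left Kan extensions is routine.
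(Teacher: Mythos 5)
First, a point of orientation: the paper gives no proof of this statement at all --- it is quoted, in dualized form, from \cite[Theorem 4.1.5.8]{HA} and used as a black box in the proof of Theorem \ref{cubeiscogroup}. So the real comparison is with Lurie's argument, and your proposed architecture is essentially that argument: characterize $coGrp_n^{pre}(\C)$ inside $\Fun(\Delta_n^0,\C)$ as those functors which are pointwise left Kan extensions of their restriction along $i\colon \Delta_{n-1}\hookrightarrow\Delta_n^0$, with that restriction an $A_{n-1}$-cogroup, and then invoke the trivial-fibration criterion for Kan extensions \cite[Proposition 4.3.2.15]{HTT}, using finiteness of the comma categories and finite cocompleteness of $\C$ for existence. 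Your description of the comma category over $[n]$ is also correct: taking $e$ to be the injection with image $\{0,n\}$, the comma category is the nerve of the poset of proper nonempty subsets of $\{0,\dots,n\}$ with the single vertex $\{0,n\}$ deleted, and your explanation of why condition $(3)$ is imposed only for $n>2$ --- it lives entirely on $\Delta_2\subseteq\Delta_{n-1}$, while for $n=2$ the edge carrying it is exactly the deleted one --- is the right one.

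The genuine gap is the one you flag yourself, and it is not a side issue: the equivalence ``$A^0$ satisfies the precogroup conditions at $[n]$'' $\iff$ ``$A^0$ is a pointwise left Kan extension at $[n]$'' is asserted but never proved, and it carries the entire mathematical content of the theorem; everything else in your write-up is the formal part. Both directions require showing that, for a diagram restricted from an $A_{n-1}$-cogroup, the colimit over the deleted-vertex poset agrees with the pushouts demanded by $(2')$, and this is emphatically not a statement about the shape of the poset alone: it uses condition $(1)$ and the lower Segal/pushout conditions through a cofinality or inductive decomposition argument. Already for $n=2$ one needs cofinality of the sub-zigzag $\{0,1\}\supseteq\{1\}\subseteq\{1,2\}$ plus $A\{1\}\simeq 0$ to identify the Kan-extension colimit with $A[1]\amalg A[1]$; for $n\ge 3$ the poset is higher-dimensional and $(2')$ supplies several competing pushout presentations of $A[n]$, all of which must be reconciled with the single colimit --- that reconciliation is the theorem. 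A secondary point requiring care: for $n\ge 3$ the simplicial set $\Delta_n^0$ is not an $\infty$-category (in $\Delta_3^0$ the edges $\partial_1\colon[1]\to[2]$ and the injection $[2]\to[3]$ with image $\{0,1,3\}$ are both present, but the inner horn they span has no filler there, since the unique filler in $\Delta_3$ has long edge $e$), so you cannot quote \cite[Proposition 4.3.2.15]{HTT} in its stated form for full subcategories of $\infty$-categories; you need the Kan-extension formalism for inclusions of arbitrary simplicial sets. Neither point is fatal --- this is the right road, and it is the one the cited source travels --- but as written your proposal is a correct reduction of the theorem to an unproven lemma rather than a proof of it.
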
 

We will utilize similar objects defined as diagrams on a cubical version of $\Delta^0_n$.
\begin{defn}
Let $\square[n+1]_0^0$ be defined by the pullback of simplicial sets
\begin{center}
    \begin{tikzcd}
        \square[n+1]_0^0\ar[r]\ar[d, hook] & \Delta^0_n\ar[d, hook] \\
        \square[n+1]_0\ar[r] & \Delta_n
    \end{tikzcd}
\end{center}
\end{defn}

\begin{lem}\label{pushout} The pullback square \begin{center}
    \begin{tikzcd}
        \square[n+1]_0^0\ar[r]\ar[d, hook] & \Delta^0_n\ar[d, hook] \\
        \square[n+1]_0\ar[r] & \Delta_n
    \end{tikzcd}
\end{center}
is additionally a pushout square.
\end{lem}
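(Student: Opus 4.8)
The plan is to verify the pushout property directly and combinatorially on simplices, exploiting that colimits of simplicial sets are computed degreewise and that $\Delta_n^0\hookrightarrow\Delta_n$ is a monomorphism. Write $P$ for the pushout $\square[n+1]_0^0$-pushout, i.e. $\square[n+1]_0\amalg_{\square[n+1]_0^0}\Delta_n^0$. Since the two maps $\pi_n\colon\square[n+1]_0\to\Delta_n$ and $\Delta_n^0\hookrightarrow\Delta_n$ agree on $\square[n+1]_0^0$ by the defining pullback, they induce a canonical comparison map $P\to\Delta_n$, and it suffices to show this is a bijection on $m$-simplices for every $m$. Because $\square[n+1]_0^0\hookrightarrow\square[n+1]_0$ is a monomorphism (a pullback of the monomorphism $\Delta_n^0\hookrightarrow\Delta_n$), the degreewise pushout of sets simplifies to $P_m=(\Delta_n^0)_m\sqcup\big[(\square[n+1]_0)_m\setminus(\square[n+1]_0^0)_m\big]$.

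Next I would organize $\Delta_n$ according to the edge $e$. A simplex of $\Delta_n$ lies in $\Delta_n^0$ exactly when it does not contain $e\colon[1]\to[n]$ as a face (since $e$ is not an identity, the complement of the $e$-containing simplices is closed under faces and degeneracies, hence is the largest such subcomplex). Because the square is a pullback, the simplices of $\square[n+1]_0$ outside $\square[n+1]_0^0$ are precisely those whose image under $\pi_n$ contains $e$. Thus on $P_m$ the comparison map carries the first summand bijectively onto the simplices not containing $e$ and carries the second summand into the simplices that do contain $e$; these two target classes are disjoint, so bijectivity of $P_m\to(\Delta_n)_m$ reduces to the single claim that $\pi_n$ restricts to a bijection from $\{\sigma\in(\square[n+1]_0)_m : \pi_n(\sigma)\text{ contains }e\}$ onto $\{\rho\in(\Delta_n)_m : \rho\text{ contains }e\}$.

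To prove that claim I would use the description of $\square[n+1]_0$ as chains of nonempty subsets $S_0\subseteq\cdots\subseteq S_m$ of $\{0,\dots,n\}$ afforded by Lemma \ref{sliceiso}, under which $\pi_n$ records only cardinalities and relative inclusions, so that the image of such a chain is $([\,|S_0|-1]\to\cdots\to[\,|S_m|-1])$. A lift of a chain $\rho=([k_0]\to\cdots\to[k_m])$ amounts to a choice of top subset $S_m$ of cardinality $k_m+1$, after which all lower $S_i$ are forced as images of the structure maps; hence the fibre of $\pi_n$ over $\rho$ is the set of $(k_m+1)$-element subsets of $\{0,\dots,n\}$, which is a singleton exactly when $k_m=n$. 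The key observation is that a simplex $\rho=(c_0\to\cdots\to c_m)$ containing $e$ as a face must have top vertex $[n]$: if the composite $c_i\to c_j$ equals $e$ then $c_j=[n]$, and every injection $[n]\to[k]$ with $k\le n$ in $\Delta^{inj}$ is the identity, forcing $c_j=c_{j+1}=\cdots=c_m=[n]$. Therefore every simplex containing $e$ has $k_m=n$ and a unique lift, yielding the desired bijection and completing the degreewise isomorphism $P\cong\Delta_n$.

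I expect the main obstacle to be exactly this last observation pinning down when the fibres of $\pi_n$ collapse to a single point; everything preceding it is formal manipulation of degreewise pushouts along monomorphisms together with the pullback description of $\square[n+1]_0^0$. One point deserving care is that the argument must accommodate degenerate simplices as well, but the subset-chain description handles these uniformly, since degeneracies correspond to repetitions $S_i=S_{i+1}$ and the top-vertex computation is insensitive to degeneracy, so no separate treatment is required.
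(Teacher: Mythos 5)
Your proof is correct and follows essentially the same route as the paper's: both reduce the pushout claim to showing that each simplex of $\Delta_n$ containing the edge $e$ (equivalently, lying outside $\Delta_n^0$) lifts uniquely to $\square[n+1]_0$, using the key fact that such a simplex must have top vertex $[n]$, which forces the lifting chain of subsets as images of the structure maps. Your write-up merely makes explicit what the paper leaves implicit --- the degreewise pushout formula along the monomorphism and the uniqueness of the lift via the fibre analysis --- so it is a more detailed rendering of the same argument.
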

\begin{proof}
Consider a chain of inclusions $[m_0]\xrightarrow{f_1}[m_1]\xrightarrow{f_2}\dots\xrightarrow{f_k}[m_k]$ in $\Delta_n$ which does not belong to $\Delta_n^0$. We want to show that such a $k$-simplex lifts to one in $\square[n+1]_0$. Necessarily $[m_k] = [n]$.  The lift is given by the image of the chain of inclusions $J_0 \subseteq J_1 \subseteq \dots\subseteq [n]$ where $J_i = im(f_k\circ\cdots\circ f_{i+1})$.  This is similar to the proof of \cite[Proposition 4.1.5.7]{HA}.
\end{proof}

There is an inclusion \[P(\{0,\dots, n-1\})\cup P_0(\{1,\dots, n\})\hookrightarrow P_0(\{0,\dots, n\})\] by partitioning $P_0(\{0,\dots, n\})$ into those subsets which respectively contain and do not contain 0. By pulling back this gives a functor \[p:Fun(\square[n+1]_0, \C)\to Fun(\square[n], \C)\times Fun(\square[n]_0, \C) \] 

\begin{defn}
Let $Fun^{sco}(\square[n]_0, \C)$ denote the full subcategory of $Fun(\square[n]_0, \C)$ spanned by the $(n)$-cubes which are strongly cocartesian and which have initial vertex a 0 object. We call these objects \textit{special cocartesian $(n)$-cubes}.
\end{defn}

The functor $p$ restricts to a functor on special punctured cubes \[ T_n(\C)\to Fun^{sco}(\square[n], \C)\times T_{n-1}(\C) \] 
In this setup we view a special punctured $(n+1)$-cube is containing the data of a special punctured $n$-cube, a special cocartesian $n$-cube, and a collection of ``connecting structure" between the two.

The functor $p$ factors through a functor \[Fun(\square[n+1]_0^0, \C)\to Fun(\square[n], \C)\times Fun(\square[n]_0, \C)\]  which motivates a definition of special $\square[n+1]_0^0$ diagrams.

\begin{defn}
We define a category $T_n^0(\C)$ of special $\square[n+1]_0^0$ diagrams in an $\infty$-category $\C$ as the pullback
\begin{center}
    \begin{tikzcd}
    T_n^0(\C)\ar[r]\ar[d] & Fun(\square[n]_0^0, \C)\ar[d] \\
    Fun^{sco}(\square[n], \C)\times T_{n-1}(\C)\ar[r] & Fun(\square[n], \C)\times Fun(\square[n]_0, \C)
    \end{tikzcd}
\end{center}
\end{defn}
Note that the forgetful functor \[Fun(\square[n+1]_0, \C)\to Fun(\square[n+1]_0^0, \C)\] restricts to a functor $T_n(\C)\to T_n^0(\C)$.

Again we can view an object of $T_n^0(\C)$ as special punctured $n$-cube, a special cocartesian $n$-cube, and a smaller collection of connecting structure between the two. A key idea will be that the special cocartesian $n$-cubes coming from a special punctured $(n+1)$-cube are already determined by the rest of the structure.  Already the $\infty$-category $Fun^{sco}(\square[n], \C)$ contains very little information.

\begin{lem}\label{sconodata}
The functor $Fun^{sco}(\square[n], \C)\xrightarrow{\sim} \C^n$ given by pulling back along the inclusion of cardinality one subsets of $[n]$ is an equivalence.
\end{lem}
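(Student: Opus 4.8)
The plan is to exhibit $Fun^{sco}(\square[n],\C)$ as the full subcategory of $Fun(\square[n],\C)$ spanned by those cubes that are left Kan extensions of their restriction to the singleton subsets, and then to invoke the standard fact that, whenever the relevant Kan extensions exist, restriction is a trivial fibration onto the category of restricted diagrams. Throughout I write $\square[n]=P(\{0,\dots,n-1\})$ and let $P_{\le 1}$ and $P_{=1}$ denote the subposets of subsets of cardinality $\le 1$ and $=1$ respectively; since $\C$ admits finite colimits, all Kan extensions appearing below exist.

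First I would recall the standard characterization of strongly cocartesian cubes: a cube $\cX:\square[n]\to\C$ is strongly cocartesian if and only if it is a left Kan extension of its restriction $\cX|_{P_{\le 1}}$ along $P_{\le 1}\hookrightarrow\square[n]$. This handles the ``every face is a pushout'' half of the definition of $Fun^{sco}$.

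Next I would incorporate the zero-vertex condition by passing from $P_{\le 1}$ down to $P_{=1}$. The only object of $P_{\le 1}$ not already in $P_{=1}$ is the minimum $\emptyset$, and because $\emptyset$ is initial in the poset the comma category computing the Kan extension of $\cX|_{P_{=1}}$ at $\emptyset$ is empty. Hence $\cX|_{P_{\le 1}}$ is a left Kan extension of $\cX|_{P_{=1}}$ exactly when $\cX(\emptyset)$ is an empty colimit, i.e. an initial object; as $\C$ is pointed this is precisely the condition $\cX(\emptyset)\simeq 0$. Combining this observation with the previous step and the transitivity of left Kan extensions identifies $Fun^{sco}(\square[n],\C)$ with the full subcategory of $Fun(\square[n],\C)$ on functors that are left Kan extensions of their restriction to $P_{=1}$. (The reverse containment may also be checked directly: a left Kan extension from the discrete poset $P_{=1}$ sends $S$ to $\coprod_{k\in S}\cX(\{k\})$, a cube all of whose faces are manifestly pushouts and whose initial vertex is initial.)

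Finally, since every functor $P_{=1}\to\C$ admits a left Kan extension along $P_{=1}\hookrightarrow\square[n]$, restriction is a trivial fibration from this full subcategory onto $Fun(P_{=1},\C)$. By the preceding step the source is exactly $Fun^{sco}(\square[n],\C)$, the target is $\C^n$ because $P_{=1}$ is discrete on $n$ objects, and the restriction functor is precisely pullback along the inclusion of the singleton subsets; this is the asserted equivalence. I expect the only delicate point to be the middle step, namely matching the geometric condition ``strongly cocartesian with zero initial vertex'' with the homotopy-theoretic condition ``left Kan extension from the singletons,'' which rests on the empty-comma-category computation together with transitivity of Kan extensions; the outer two steps are direct appeals to standard Kan-extension technology.
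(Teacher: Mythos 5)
Your proof is correct and follows essentially the same route as the paper's: both identify $Fun^{sco}(\square[n],\C)$ with the image of the fully faithful left Kan extension functor from diagrams on the subsets of cardinality at most one (with zero initial vertex), and then read off the equivalence with $\C^n$. The only difference is one of detail: where the paper simply asserts that the relevant category of diagrams on $P_{\le 1}$ is equivalent to $\C^n$, you justify this by a further Kan extension step down to $P_{=1}$ via the empty-comma-category observation and transitivity, which makes the last step fully rigorous.
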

\begin{proof}
There is a fully faithful left Kan extension functor \[Fun(P_{\le1}(\{0,\dots,n-1\}, \C)\hookrightarrow Fun(\square[n], \C)\] from diagrams defined on subsets of $[n-1]$ of cardinality less than two to diagrams defined on the entire powerset of $[n-1]$.  $Fun^{sco}(\square[n], \C)$ can be identified with the image of this functor on the subcategory of $Fun(P_{\le1}(\{0,\dots,n-1\}, \C)$ of diagrams with initial vertex a 0 object.  This category is equivalent to $\C^n$.
\end{proof}

In Example \ref{case3} we saw by taking cofibers that a special punctured 3-cube takes all initial non-zero vertices to equivalent objects.  We will need a generalization of this to special punctured $n$-cubes.

\begin{lem}\label{clawlemma}
A special punctured $(n+1)$-cube $\cX:\square[n+1]_0\to \C$ takes all cardinality two subsets of $[n]$ to equivalent objects in $\C$.
\end{lem}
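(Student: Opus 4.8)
The plan is to promote the cofiber computation of Example~\ref{case3} to arbitrary $n$. The only structural input I need is an elementary fact about pushout squares in a pointed $\infty$-category with finite colimits: given a pushout square with top-left vertex $W$, top-right $X$, bottom-left $Y$, bottom-right $Z$, the induced map on vertical cofibers $\mathrm{Cof}(W\to Y)\to\mathrm{Cof}(X\to Z)$ is an equivalence. Indeed, since $Z\simeq X\sqcup_W Y$, the pasting law for pushouts gives $\mathrm{Cof}(X\to Z)\simeq Z\sqcup_X 0\simeq(X\sqcup_W Y)\sqcup_X 0\simeq Y\sqcup_W 0\simeq\mathrm{Cof}(W\to Y)$. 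I will apply this repeatedly, using that $\cX$ vanishes on every cardinality-one subset, so that a cofiber of the form $\mathrm{Cof}(0\to\cX(T))$ is simply $\cX(T)$.

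The heart of the argument is a single comparison. Fix three distinct elements $s,x,y\in\{0,\dots,n\}$ and consider the two $2$-faces of the punctured cube that both contain the edge $\cX(\{s,x\})\to\cX(\{s,x,y\})$ given by adjoining $y$: namely the face on the vertices $\{s\},\{s,x\},\{s,y\},\{s,x,y\}$ with minimal vertex $\{s\}$, and the face on $\{x\},\{s,x\},\{x,y\},\{s,x,y\}$ with minimal vertex $\{x\}$. Both faces exist in the punctured cube, since all their vertices are nonempty, and both are pushouts by the strongly cocartesian hypothesis; moreover each has a cardinality-one vertex, hence a zero object, sitting opposite the shared edge. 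Taking vertical cofibers in the ``$+y$'' direction and applying the observation above, the first face yields $\cX(\{s,y\})\simeq\mathrm{Cof}\big(\cX(\{s,x\})\to\cX(\{s,x,y\})\big)$ and the second yields $\cX(\{x,y\})\simeq\mathrm{Cof}\big(\cX(\{s,x\})\to\cX(\{s,x,y\})\big)$. Comparing the two identifications gives $\cX(\{s,y\})\simeq\cX(\{x,y\})$; that is, any two cardinality-two subsets sharing a common element are sent to equivalent objects.

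Finally I would deduce the full statement by chaining. Any two cardinality-two subsets $\{a,b\}$ and $\{c,d\}$ can be joined by a sequence of cardinality-two subsets in which consecutive terms share an element (for instance $\{a,b\}\to\{a,c\}\to\{c,d\}$ when the four elements are distinct), and each step is an equivalence by the comparison above; when $n=2$ there are only three cardinality-two subsets, every pair already shares an element, and no chaining is needed. Since the lemma asserts only that the objects are equivalent, object-level equivalences suffice and no coherence need be tracked. I expect the only genuine work to be the bookkeeping: verifying that the two chosen faces really do share the $+y$ edge and that the zero vertices sit opposite it, so that the two cofiber computations land on the same object. Beyond the $n=2$ case already exhibited in Example~\ref{case3}, there is no conceptual obstacle.
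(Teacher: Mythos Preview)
Your proof is correct and follows essentially the same approach as the paper: isolate two cocartesian $2$-faces with a zero vertex and a shared edge, compare cofibers to identify two cardinality-two values, then chain through shared elements. The paper phrases the first step as restricting to a sub-punctured $3$-cube and invoking Example~\ref{case3} as a black box, whereas you unpack that cofiber computation inline; the arguments are otherwise identical.
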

\begin{proof}
The case for $n=2$ was proved already in Example \ref{case3}.  If $n>2$ we can choose four distinct elements $\{a,b,c,d\}$.  Consider the subsets $\{a,b\}, \{b,c\}$ and form the special punctured 3-cube 
\begin{center}
    \begin{tikzcd}[column sep = 5pt, row sep=5pt]
                            &                             & \{b\} \arrow[dd] \arrow[rd] &                      \\
                            & \{c\} \arrow[dd] \arrow[rr] &                             & {\{b,c\}} \arrow[dd] \\
\{a\} \arrow[rr] \arrow[rd] &                             & {\{a,b\}} \arrow[rd]        &                      \\
                            & {\{a,c\}} \arrow[rr]        &                             & {\{a,b,c\}}         
\end{tikzcd}
\end{center}
Any special punctured $n$-cube $\cX$ takes this subdiagram to a special punctured 3-cube, providing an equivalence $\cX\{a,c\}\simeq \cX\{b,c\}$ via the argument of Example \ref{case3}.  Likewise we can form a punctured 3-cube amongst $\{b,c,d\}$ and obtain an equivalence $\cX\{b,c\}\simeq \cX\{b,d\}$, giving an equivalence $\cX\{a,c\}\simeq\cX\{b,d\}$.  In this way we can connect any two cardinality two subsets of $[n]$ via an equivalence.
\end{proof}

We are now ready to prove our first main result.

\begin{thm}\label{cubeiscogroup}
Let $\C$ be a pointed $\infty$-category with finite colimits. The functor $\pi_n^*:coGrp_n(\C)\rightarrow T_n(\C)$ is an equivalence of $\infty$-categories
\end{thm}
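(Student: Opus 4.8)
The plan is to induct on $n$, reducing the comparison at level $n$ to level $n-1$ by factoring $\pi_n^*$ through the precogroup and $T_n^0$ constructions. The base case $n\le 1$ is immediate: the only $2$-face of a punctured $2$-cube is the full (punctured) square, so the strongly cocartesian condition imposes nothing and $T_1(\C)\simeq \C \simeq coGrp_1(\C)$, with $\pi_1^*$ visibly realizing this identification; Example \ref{case3} is the corresponding sanity check for $n=2$.

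The structural heart of the inductive step is Lemma \ref{pushout}. Since $Fun(-,\C)$ carries the pushout $\Delta_n=\square[n+1]_0\cup_{\square[n+1]_0^0}\Delta_n^0$ to a pullback, and since restriction along a monomorphism of simplicial sets is an isofibration (so that the strict pullback of quasicategories computes the homotopy pullback), one obtains a homotopy pullback square
\begin{center}
\begin{tikzcd}
coGrp_n(\C)\ar[r, "\pi_n^*"]\ar[d] & T_n(\C)\ar[d]\\
coGrp_n^{pre}(\C)\ar[r] & T_n^0(\C)
\end{tikzcd}
\end{center}
once one checks, using Lemma \ref{colimsegal}, that the cogroup conditions on $\Delta_n$ split exactly into the special-cube condition on the $\square[n+1]_0$ factor and the precogroup condition on the $\Delta_n^0$ factor. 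The lower edge $\square[n+1]_0\to\Delta_n$ of the pushout square is precisely $\pi_n$, so the top horizontal map genuinely is $\pi_n^*$, and it is the base change of the bottom map. It therefore suffices to prove that $coGrp_n^{pre}(\C)\to T_n^0(\C)$ is an equivalence.

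For this I would stack the square above onto
\begin{center}
\begin{tikzcd}
coGrp_n^{pre}(\C)\ar[r]\ar[d, "\sim"'] & T_n^0(\C)\ar[d]\\
coGrp_{n-1}(\C)\ar[r, "\sim"'] & T_{n-1}(\C)
\end{tikzcd}
\end{center}
whose left vertical is the trivial fibration of \cite[Theorem 4.1.5.8]{HA}, whose bottom is $\pi_{n-1}^*$ (an equivalence by the inductive hypothesis), and whose right vertical forgets a $\square[n+1]_0^0$ diagram down to its special punctured $n$-cube on the subsets of $\{0,\dots,n\}$ avoiding $0$. By two-out-of-three it remains to show this right vertical is an equivalence. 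Splitting $\square[n+1]_0^0$ into its $0$-containing and $0$-avoiding parts, the forgotten data is a special cocartesian $n$-cube (indexed by subsets containing $0$) together with the connecting maps. By Lemma \ref{sconodata} such a cube is freely determined by its cardinality-one vertices $\{0,i\}$, and by Lemma \ref{clawlemma} together with the maps $\{i\}\to\{0,i\}$ present in $\square[n+1]_0^0$, these vertices are forced up to contractible choice by the cardinality-two vertices of the underlying punctured $n$-cube; hence the fibers are contractible and the map is an equivalence.

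I expect the real work to lie in two bookkeeping points rather than in any single deep idea. First, one must verify carefully that the cogroup conditions restrict cleanly across the pushout of Lemma \ref{pushout}, so that the first square is genuinely a pullback of the \emph{special} subcategories on the nose and not merely of the ambient functor categories; this is where Lemma \ref{colimsegal} and condition $(2')$ must be matched against the strongly cocartesian condition. Second, and more delicately, one must make precise that $T_n^0(\C)$ adds no homotopical data over $T_{n-1}(\C)$: the combination of Lemmas \ref{sconodata} and \ref{clawlemma} controls the vertices $\{0,i\}$, but assembling these identifications into a genuinely contractible space of fillings for the cocartesian cube and its connecting structure is the step most likely to require care.
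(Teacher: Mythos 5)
Your skeleton (induction on $n$, the pullback square coming from Lemma \ref{pushout}, the input of \cite[Theorem 4.1.5.8]{HA}) matches the paper's proof, but your final step rests on a claim that is false: neither $T_n^0(\C)\to T_{n-1}(\C)$ nor $coGrp_n^{pre}(\C)\to T_n^0(\C)$ is an equivalence. The error is the application of Lemma \ref{clawlemma} to objects of $T_n^0(\C)$. That lemma is a statement about objects of $T_n(\C)$, and its proof (Example \ref{case3}) crucially uses the pushout condition on the \emph{mixed} faces, those with some vertices containing $0$ and some not. By definition, $T_n^0(\C)$ only constrains the two restricted subdiagrams separately: the $0$-containing $n$-cube must be special cocartesian and the $0$-avoiding punctured $n$-cube must be special; the connecting data is completely unconstrained (and the mixed faces through the deleted edge are not even present in $\square[n+1]_0^0$). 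The maps $\{i\}\to\{0,i\}$ that you invoke emanate from zero objects, hence are contractible data and force nothing. So nothing ties the vertices $\cX(\{0,i\})$ of the cocartesian cube to the vertices $\cX(\{i,j\})$ of the punctured cube. Concretely, already for $n=2$ one has $T_2^0(\C)\simeq \C\times\C^2$, recording $\cX(\{1,2\})$ together with the two \emph{free} objects $\cX(\{0,1\})$ and $\cX(\{0,2\})$, whereas $T_1(\C)\simeq\C$ and $coGrp_2^{pre}(\C)\simeq\C$; the forgetful functor $T_2^0(\C)\to T_1(\C)$ is a projection with large fibers, and $\pi_2^0$ is far from essentially surjective. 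Your two-out-of-three argument therefore cannot get started, and this failure propagates to every stage of the induction.

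The reduction itself is also stronger than what is needed or true: since the first square is a pullback, what you actually need is that $coGrp_n^{pre}(\C)\to T_n^0(\C)$ becomes an equivalence \emph{after base change along} $T_n(\C)\to T_n^0(\C)$, not that it is an equivalence outright. This is exactly how the paper proceeds: it never compares $coGrp_n^{pre}(\C)$ with $T_n^0(\C)$ globally, but compares fibers of maps out of $T_n(\C)$. For an $A_{n-1}$-cogroup $A$ with precogroup extension $A^0$, the pullback square identifies $coGrp_n(\C)_{A^0}$ with $T_n(\C)_{\pi_n^0A^0}$, and the remaining step is to show that the natural map $T_n(\C)_{\pi_n^0A^0}\to T_n(\C)_{\pi_{n-1}A}$ is an equivalence. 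It is in this fiberwise comparison that Lemmas \ref{sconodata} and \ref{clawlemma} legitimately apply: the objects being compared there live in $T_n(\C)$, where \emph{every} face is a pushout, so the vertices $\cX(\{0,i\})$ are automatically equivalent to the fixed vertices of the $0$-avoiding part, and the extra conditions cutting out the fiber over $\pi_n^0A^0$ are vacuous. Your instinct about which lemmas do the work is right, but they are deployed one level too early, on the target $T_n^0(\C)$, rather than inside $T_n(\C)$ where the strongly cocartesian hypothesis is available.
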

\begin{proof}
This is clearly the case for $n=1$.  We will show that the square 
\begin{center}
    \begin{tikzcd}
coGrp_n(\C) \arrow[r, "\pi_n"] \arrow[d] &   T_n(\C) \arrow[d] \\
coGrp_{n-1}(\C) \arrow[r, "\pi_{n-1}"]        & T_{n-1}(\C)      
\end{tikzcd}
\end{center}
is a pullback, proving the claim by induction.  This will be accomplished by showing that for any $A_{n-1}$-cogroup object $A$ in $\C$ there is an equivalence of fibers \[ coGrp_{n}(\C)_A \simeq T_n(\C)_{\pi_{n-1}A} \] 

By \cite[Theorem 4.1.5.8]{HA} there is an essentially unique extension of $A$ to an $A_n$-precogroup object $A^0:\Delta^0_n\to \C$  and so an equivalence of fibers $coGrp_n(\C)_A\simeq coGrp_n(\C)_{A^0}$.  From the pushout square of Lemma \ref{pushout} we obtain a pullback square
\begin{center}
\begin{tikzcd}
    coGrp_n(\C)\ar[r, "\pi_n"]\ar[d] & T_n(\C)\ar[d] \\
    coGrp_n^{pre}(\C)\ar[r, "\pi^0_n"] & T^0_n(\C)
    
\end{tikzcd}
\end{center}
and hence an equivalence of fibers $coGrp_n(\C)_{A^0} \simeq T_n(\C)_{\pi^0_nA^0}$.  It suffices to show now that the natural map of fibers $T_n(\C)_{\pi^0_nA^0}\to T_n(\C)_{\pi_{n-1} A}$ is an equivalence.

By abuse of notation, let $A$ denote the object $A[1]$, The functor $\pi_n^0A^0$ factors via the pullback in the diagram below since the cardinality two vertices are all equal to $A$. Note also that the pullback in the bottom left vertex of the diagram below is a homotopy pullback, so that the square is an actual pullback.
\begin{center}
    \begin{tikzcd}[column sep=0.3cm]
{[0]} \arrow[rr] \arrow[rrrr, "\pi_n^0A^0", bend left=10] &  & \bullet \arrow[rr] \arrow[d]                                                  &  & {Fun(\square[n+1]_0^0, \C)} \arrow[d]                 \\
                                                        &  & {(Fun^{sco}(\square[n], \C)\times T_{n-1}(\C))\times_{C^{2n}} \{A\}} \arrow[rr] &  & {Fun(\square[n], \C)\times Fun(\square[n]_0, \C)}
\end{tikzcd}
\end{center}
The weak equivalence $Fun^{sco}(\square[n], \C)\xrightarrow{\sim} \C^n$ of Lemma \ref{sconodata} gives a weak equivalence \[(Fun^{sco}(\square[n], \C)\times T_{n-1}(\C))\times_{\C^{2n}} \{A\} \xrightarrow{\sim} T_{n-1}(\C)\times_{\C^n} \{A\}\] which allows us to greatly simplify the bottom left vertex of the above diagram.

Note now that  $\pi_n^0 A^0$ lands in $T_n^0(\C)\hookrightarrow Fun(\square[n+1]_0^0, \C)$ and that under the above equivalence $T_{n-1}(\C)\times_{\C^n} \{A\}$ maps into \[T_{n-1}(\C)\times \C^n\simeq Fun^{sco}(\square[n], \C)\times T_{n-1}(\C)\ \hookrightarrow Fun(\square[n], \C)\times Fun(\square[n]_0, \C)\] 
So that the above square factors and we can compute the fiber $T_n(\C)_{\pi_n^0A^0}$ equally as the pullback in the left hand square below, which is seen to be a pullback using two applications of the pullback pasting lemma.
\begin{center}
\begin{tikzcd}[column sep=0.3cm, row sep =0.3cm]
T_n(\C)_{\pi_n^0A^0} \arrow[rr] \arrow[d] &  & P^0 \arrow[d] \arrow[rr]                                                        &  & T_n(\C) \arrow[d]                             \\
                                           {[0]} \arrow[rr]  &  & \bullet \arrow[d] \arrow[rr]                                                    &  & T_n^0(\C) \arrow[d]                           \\
                         &  & T_{n-1}(\C) \times_{\C^n}\{A\} \arrow[rr] &  & { T_{n-1}(\C)\times \C^n}
\end{tikzcd}
\end{center}
Likewise $\pi_{n-1}A$ factors via $T_{n-1}(\C)\times_{\C^n}\{A\}$ and we can compute the fiber $T_n(\C)_{\pi_{n-1}A}$ as the left hand pullback in
\begin{center}
\begin{tikzcd}
T_n(\C)_{\pi_{n-1}A} \arrow[rr] \arrow[d] &  & P \arrow[rr] \arrow[d]                   &  & T_{n}(\C) \arrow[d] \\
{[0]} \arrow[rr]                          &  & T_{n-1}(\C)\times_{\C^n}\{A\} \arrow[rr] &  & T_{n-1}(\C)        
\end{tikzcd}
\end{center}
Now compare the above right square with the square in the definition of $P^0$.
\begin{center}
\begin{tikzcd}
P^0 \arrow[rr] \arrow[d]                   &  & T_{n}(\C) \arrow[d]    \\
T_{n-1}(\C)\times_{\C^n}\{A\} \arrow[rr] &  & T_{n-1}(\C)\times \C^n
\end{tikzcd}
\end{center}
We consider the difference in these two squares. By construction we demand that the special punctured $(n+1)$-cubes in $P^0$ and $P$ both are equal to $A$ on every initial vertex in the subset $\square[n]_0\hookrightarrow \square[n+1]_0$.  In addition, in $P^0$ we demand that the vertices of cardinality one in $\square[n]\hookrightarrow \square[n+1]_0$ are also equal to $A$.  By Lemma \ref{clawlemma} this later condition is automatic in a special punctured $(n+1)$-cube already: the special $n$-cube in a special punctured $(n+1)$-cube provides no additional data. Because of this we have an equivalence $P\simeq P^0$, and so finally we obtain an equivalence of fibers $T_n(\C)_{\pi_n^0A^0}\simeq T_n(\C)_{\pi_{n-1}A}$ as desired.
\end{proof}

\section{coGroups in Goodwillie Calculus}\label{goodwillie}
We begin by recalling the parts of the theory of Goodwillie approximation to $\infty$-categories to which we will apply Theorem \ref{cubeiscogroup}. We work always in the category $\cat^\omega$ of pointed, compactly generated $\infty$-categories with morphisms the left adjoint functors which preserve compact objects.  Note that by the adjoint functor theorem such a functor necessarily admits a right adjoint.  For more details on this category see \cite[Appendix A]{H}.

\begin{defn} (Heuts)
An adjunction $F:\C\rightleftarrows \cD:G$ is a \textit{weak $n$-excisive approximation} to $\C$ if
\begin{itemize}
    \item The identity $1_{\cD}$ is $n$-exsisive.
    \item The induced maps $P_n(1_\C)\xrightarrow{\sim} GF$ and $P_n(FG)\xrightarrow{\sim} 1_{\cD}$ are equivalences.
\end{itemize}
\end{defn}

\begin{defn} (Heuts)
An $\infty$-category $\cD$ is \textit{$n$-excisive} if every weak $n$-excisive approximation to $\cD$ is an equivalence. An adjunction $F:\C\rightleftarrows \cD:G$ is an $n$-excisive approximation to $\C$ if it is a weak $n$-excisive approximation and the $\infty$-category $\cD$ is $n$-excisive.
\end{defn}

\begin{ex}
An $\infty$-category is $1$-excisive if and only if it is stable. The $1$-excisive approximation to $\cS_\star$, the $\infty$-category of pointed spaces, is the usual stabilization adjunction $\Sigma^\infty:\cS_\star\rightleftarrows Sp:\Omega^\infty$.
\end{ex}

Just as one can construct $n$-excisive approximations to functors, there is a natural construction of an $n$-excisive approximation to an $\infty$-category using the category $T_n\C$.

Define the $\infty$-category $\overline{T_n}\C$ of (nonpunctured) $(n+1)$-cubes in $\C$, whose objects consist of those cubes with a zero object at the one element subsets and such that every face is a pushout.  Such a cube is completely determined by its initial vertex: it is a right Kan extension for the initial vertex to the cardinality one subsets followed by a left Kan extension to the entire cube. There is a natural span of functors
\begin{center}
    \begin{tikzcd}
        &   \overline{T_n}\C\ar[dl, two heads,"\sim"']\ar[dr]    &   \\
        \C      &&  T_n\C   
    \end{tikzcd}
\end{center}
with left leg, evaluation at the initial vertex, a trivial Kan fibration by \cite[Thm 4.3.2.15]{HTT}, and with right leg the functor that forgets the initial vertex.  Choosing a section to the trivial fibration gives a functor $L_n:\C\rightarrow T_n\C$ which sends an object of $\C$ to the punctured $(n+1)$-cube which successively takes pushouts of suspensions of the object, and then forgets the initial vertex. Note that the functor $L_n$ admits a right adjoint $R_n:T_n\C\rightarrow \C$ which takes the limit of the punctured cube.

\begin{ex}\label{example}
Let $A\in\C$.  The functor $L_2:\C\rightarrow T_n\C$ takes A to a cube with vertices equivalent to the objects below left, and then forgets the initial vertex giving the cube below right
\begin{center}
    \begin{tikzcd}[row sep = 5pt, column sep = 5pt]
    A\ar[rr]\ar[dd]\ar[dr]\   &&  0\ar[dd]\ar[dr]   &       \\
        &   0\ar[dd]\ar[rr]   &&  \Sigma A\ar[dd]   \\
    0\ar[rr]\ar[dr]   &&  \Sigma A\ar[dr]   &       \\
        &   \Sigma A\ar[rr]   &&  \Sigma A \vee \Sigma A   
    
    \end{tikzcd}
    \hspace{5pt}
    \begin{tikzcd}[row sep = 5pt, column sep = 5pt]
       &&  0\ar[dd]\ar[dr]   &       \\
        &   0\ar[dd]\ar[rr]   &&  \Sigma A\ar[dd]   \\
    0\ar[rr]\ar[dr]   &&  \Sigma A\ar[dr]   &       \\
        &   \Sigma A\ar[rr]   &&  \Sigma A \vee \Sigma A   
    \end{tikzcd}
\end{center}
Note that identifying the bottom square with the coproduct inclusion of $\Sigma A$ into $\Sigma A \vee \Sigma A$ we recover the $A_2$-cogroup structure on $\Sigma A$, as in Example \ref{case3}.
\end{ex}


Using the functor $L_n:\C\rightarrow T_n\C$ one may build a chain 
\[\C\rightarrow  T_n\C \rightarrow T_n(T_n\C)\rightarrow T_n(T_n(T_n\C))\rightarrow\dots P_n\C = \colim_k T_n^k\C \]
and an adjunction $\Sigma^\infty_n\C\leftrightarrows P_n\C: \Omega^\infty_n$.

\begin{prop*}\cite[Proposition 3.15]{H}
Let $\C$ be a pointed, compactly generated $\infty$-category. Then the adjunction $\Sigma^\infty_n\C\rightleftarrows P_n\C: \Omega^\infty_n$ resulting from the above construction is an $n$-excisive approximation to $\C$.
\end{prop*}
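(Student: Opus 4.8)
The plan is to unpack the definition: the adjunction $\Sigma^\infty_n\rightleftarrows\Omega^\infty_n$ is an $n$-excisive approximation precisely when (i) the identity functor $1_{P_n\C}$ is $n$-excisive, (ii) the induced unit map $P_n(1_\C)\to \Omega^\infty_n\Sigma^\infty_n$ is an equivalence, (iii) the counit map $P_n(\Sigma^\infty_n\Omega^\infty_n)\to 1_{P_n\C}$ is an equivalence, and (iv) $P_n\C$ is an $n$-excisive $\infty$-category in the strong sense that every weak $n$-excisive approximation to it is an equivalence. The final point (iv) I would deduce formally from (i): for a weak approximation $F:P_n\C\rightleftarrows\cE:G$, the unit and counit equivalences in the definition, together with the identification $P_n(1_{P_n\C})\simeq 1_{P_n\C}$ supplied by (i), force both composites $GF$ and $FG$ to be equivalent to identities, so any such approximation is already an equivalence. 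Thus the real content lies in (i), (ii), (iii).

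For the unit condition (ii) the key observation is that $T_n$ categorifies Goodwillie's cube operator. Since $R_n$ takes the limit of a punctured cube, the endofunctor $R_nL_n$ of $\C$ sends $A$ to the total fiber of the strongly cocartesian suspension cube on $A$, which is exactly the value $T^{(n)}(1_\C)(A)$ of Goodwillie's operator on the identity (for $n=1$ this is the familiar $A\mapsto\Omega\Sigma A$). I would then show that the monad $\Omega^\infty_n\Sigma^\infty_n$, computed as the filtered colimit over $k$ of the composite adjunctions $\C\rightleftarrows T_n^{k}\C$, agrees with $P_n(1_\C)=\colim_k (T^{(n)})^{k}(1_\C)$. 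The only genuinely laborious point is matching the nested cube data inside the iterated categories $T_n^{k}\C$ with the iterates of $T^{(n)}$; this I would handle via the principle that a single application of $T_n$ shifts the Taylor tower of the identity up by one stage.

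The crux of the whole statement is condition (i), that $1_{P_n\C}$ is $n$-excisive, equivalently that every strongly cocartesian $(n+1)$-cube in $P_n\C$ is already cartesian. Granting this, the comparison map $1_{P_n\C}\to T^{(n)}(1_{P_n\C})$ is an equivalence, the defining colimit $P_n(1_{P_n\C})=\colim_k (T^{(n)})^{k}(1_{P_n\C})$ collapses, and $1_{P_n\C}$ is $n$-excisive. This cartesianness is precisely the assertion that the iterated suspension cubes formally adjoined at each stage $T_n^{k}\C$ have become simultaneously cocartesian and cartesian in the colimit; for $n=1$ it is exactly the fact that $P_1\C\simeq Sp(\C)$ is stable, where pushouts and pullbacks coincide. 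For general $n$ I expect the argument to proceed by reducing to compact objects using compact generation, commuting the filtered colimit defining $P_n\C$ past the finite limit computing the total fiber of the cube, and then using the explicit description of $L_n$ as iterated suspension to identify that total fiber with the object itself. This interchange of the stabilizing colimit with a finite limit is where I anticipate essentially all of the difficulty.

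Finally, the counit condition (iii) is dual: I would recognize the comonad $\Sigma^\infty_n\Omega^\infty_n$ as the analogous filtered colimit of cube constructions now carried out inside $P_n\C$, and invoke the $n$-excisiveness of $1_{P_n\C}$ established in (i) to see that each comparison map is already an equivalence before passing to $P_n$, so that $P_n(\Sigma^\infty_n\Omega^\infty_n)\simeq 1_{P_n\C}$. In sum, the proof reduces to the single hard geometric input that strongly cocartesian $(n+1)$-cubes in the colimit $P_n\C$ are cartesian, from which the remaining conditions follow by the colimit computation of the monad and comonad together with the formal deduction of the strong $n$-excisiveness of $P_n\C$.
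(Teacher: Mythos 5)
First, a point of order: the paper you are trying to match does not prove this statement at all --- it is imported verbatim from Heuts \cite[Proposition 3.15]{H}, so the only meaningful comparison is with the proof in that reference. Your unpacking of the definition into (i)--(iv) is correct, and your identification of $R_nL_n$ with Goodwillie's operator $T_n(1_\C)$ --- which, to be precise, is the limit of the punctured cube $S\mapsto A\ast S$, not its total fiber --- together with the ``shift the colimit by one stage'' principle, are genuine ingredients of Heuts's argument. But two of your reductions contain real gaps. The first is the claim that (iv) follows formally from (i). Given a weak $n$-excisive approximation $F\colon P_n\C\rightleftarrows\cE\colon G$, condition (i) does give that the unit $1_{P_n\C}\to GF$ is an equivalence, so $F$ is fully faithful. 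For the counit, however, the definition only supplies an equivalence $P_n(FG)\xrightarrow{\sim}1_\cE$, and to upgrade this to $FG\simeq 1_\cE$ you need $FG$ to be $n$-excisive. That is not automatic: $G$ is a right adjoint and need not carry strongly cocartesian cubes to strongly cocartesian cubes, and $F$ need not preserve the relevant limits. Your deduction feels formal because it is true when $n=1$, where adjoint functors between stable $\infty$-categories are automatically exact, hence excisive; for $n>1$ there is no analogous automatic excisiveness, and the categorical $n$-excisiveness of $P_n\C$ requires a genuine argument --- in Heuts's treatment this is precisely where the equivalence $L_n\colon P_n\C\to T_n(P_n\C)$ (obtained from $T_n$ preserving filtered colimits in $\cat^\omega$) and its consequences do real work.

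The second gap is worse: your argument for the counit condition (iii) proves too much. You assert that $n$-excisiveness of $1_{P_n\C}$ makes ``each comparison map an equivalence before passing to $P_n$,'' which would yield $\Sigma^\infty_n\Omega^\infty_n\simeq 1_{P_n\C}$. This is false already in the first nontrivial case: for $\C=\cS_*$ one has $P_1\C\simeq Sp(\cS_*)$, and the comonad $\Sigma^\infty\Omega^\infty$ on spectra is very far from the identity even though $1_{Sp(\cS_*)}$ is excisive; it is only the linearization $P_1(\Sigma^\infty\Omega^\infty)$ that is equivalent to the identity. The entire content of condition (iii) is that applying $P_n$ collapses a genuinely nontrivial comonad onto the identity, so no argument that concludes the comparison maps are equivalences ``before passing to $P_n$'' can be correct. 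This step needs an actual computation with the cube constructions inside $P_n\C$, of the kind carried out in \cite{H}, not an appeal to (i).
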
 
Using the forgetful functors $T_n(\C)\to T_{n-1}(\C)$ we can construct a tower of compact object preserving left adjoint functors \[\dots\xrightarrow{\Sigma_{n+2, n+1}}P_{n+1}\C\xrightarrow{\Sigma_{n+1, n+}}P_n\C\xrightarrow{\Sigma_{n, n-1}}\dots P_1\C \simeq Sp(\C)\]

In fact $n$-excisiveness is detected already at the level of $T_n\C$.

\begin{prop*}\cite[Proposition 3.16]{H}\label{approx}
Let $\C$ be a pointed, compactly generated $\infty$-category.  Then $\C$ is $n$-excisive if and only if the functor $L_n:\C\rightarrow T_n\C$ is an equivalence.
\end{prop*}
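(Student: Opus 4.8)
The plan is to pin the statement down to the behaviour of the single functor $L_n$ relative to the colimit $P_n\C=\colim_k T_n^k\C$, leaning on the preceding proposition that $\Sigma^\infty_n\colon\C\rightleftarrows P_n\C\colon\Omega^\infty_n$ is an $n$-excisive approximation. The first move is a reformulation: $\C$ is $n$-excisive if and only if the structure map $\Sigma^\infty_n\colon\C\to P_n\C$ is an equivalence. Indeed, this adjunction is in particular a weak $n$-excisive approximation to $\C$, so if $\C$ is $n$-excisive it must be an equivalence; conversely, if it is an equivalence then $\C\simeq P_n\C$, and $P_n\C$ is $n$-excisive as the target of an $n$-excisive approximation, whence $\C$ is $n$-excisive. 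With this in hand the easy direction is immediate: if $L_n\colon\C\to T_n\C$ is an equivalence, then, since the construction $L_n$ is natural in its input $\infty$-category, the naturality square for $L_n$ applied to the map $L_n\colon\C\to T_n\C$ shows that $L_n\colon T_n\C\to T_n^2\C$ is again an equivalence, and inductively that every map $T_n^k\C\to T_n^{k+1}\C$ in the defining tower is an equivalence; hence $\Sigma^\infty_n\colon\C\to\colim_k T_n^k\C=P_n\C$ is an equivalence and $\C$ is $n$-excisive.

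For the converse -- the substantial direction -- suppose $\C$ is $n$-excisive, so that $\Sigma^\infty_n\colon\C\to P_n\C$ is an equivalence, and I would transport the problem to $P_n\C$. The crucial input is that $L_n\colon P_n\C\to T_n(P_n\C)$ is itself an equivalence: granting that $T_n$ preserves the sequential colimit defining $P_n\C$, one has $T_n(P_n\C)\simeq\colim_k T_n^{k+1}\C$, and under this identification $L_n\colon P_n\C\to T_n(P_n\C)$ is exactly the shift map of the telescope $\colim_k T_n^k\C\to\colim_k T_n^{k+1}\C$, which is always an equivalence. Now apply naturality of $L_n$ to the equivalence $\Sigma^\infty_n\colon\C\to P_n\C$: in the resulting square the two vertical maps $\Sigma^\infty_n$ and $T_n(\Sigma^\infty_n)$ are equivalences (the latter because $T_n$ preserves equivalences), and the bottom edge $L_n\colon P_n\C\to T_n(P_n\C)$ is an equivalence by the previous sentence; three of the four edges being equivalences forces the remaining edge $L_n\colon\C\to T_n\C$ to be an equivalence as well.

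The main obstacle is precisely the claim that $T_n$ commutes with sequential colimits in $\cat^\omega$ (and that the induced comparison is the telescope shift); everything else is formal. This is exactly where the standing hypotheses are needed: $T_n\C$ is cut out of the functor $\infty$-category $\Fun(\square[n+1]_0,\C)$ on the finite poset $\square[n+1]_0$ by the conditions of being pointed at the cardinality-one vertices and of being strongly cocartesian, and these are all finite-(co)limit conditions. Since we work with compactly generated $\infty$-categories and compact-object-preserving left adjoints, filtered colimits are computed vertexwise and such finite conditions are stable under them, so $T_n$ should preserve sequential colimits; carefully verifying this compatibility is the real content. I stress that no purely formal manipulation of the tower will suffice, since the factorization $\Sigma^\infty_n=(\C\xrightarrow{L_n}T_n\C\to P_n\C)$ shows only that $L_n$ is a split monomorphism when $\Sigma^\infty_n$ is an equivalence. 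As a sanity check, for $n=1$ one has $T_1\C\simeq\C$ and $L_1=\Sigma$, and the statement specializes to the familiar fact that an $\infty$-category is stable exactly when suspension is invertible.
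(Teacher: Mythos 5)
This paper does not actually prove the statement you were given: it is quoted wholesale from Heuts, with the citation \cite[Proposition 3.16]{H} standing in for a proof. So the only meaningful comparison is with the argument in \cite{H}, and measured that way your proposal follows essentially the same route as the source. Your reformulation of $n$-excisiveness of $\C$ as invertibility of $\Sigma^\infty_n:\C\to P_n\C$ is a correct use of \cite[Proposition 3.15]{H} together with invariance of the notion under equivalence; the forward direction via naturality of $L_n$ (which \cite{H} needs anyway to build the tower functorially) and the fact that a sequential colimit along equivalences is equivalent to its first term is correct; and in the converse direction the naturality square at $\Sigma^\infty_n$ plus two-out-of-three is exactly the right mechanism.

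The one step separating your text from a complete proof is the one you flag yourself: that $T_n$ preserves sequential colimits in $\cat^\omega$, so that $T_n(P_n\C)\simeq \colim_k T_n^{k+1}\C$ with $L_n:P_n\C\to T_n(P_n\C)$ identified with the telescope shift. This is genuine content, not a formality --- it is established as a standalone step in \cite{H} before the proposition is deduced --- and your justification sketch is slightly off target. ``Filtered colimits are computed vertexwise'' is a statement about colimits of \emph{diagrams} inside a fixed compactly generated $\C$; what is needed is a statement about filtered colimits of the $\infty$-categories themselves in $\cat^\omega$, which are computed by passing to compact objects, forming the colimit in $\cat$, and Ind-completing. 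One must then check that a special punctured cube of compact objects in the colimit descends to some finite stage, and that the zero-object and strong-cocartesianness conditions are created at a finite stage (using that the transition functors preserve finite colimits). This does go through, and your instinct that compact generation is precisely what makes it work is right, but as written it is an assumption rather than an argument. Granting that lemma, your proof is correct; your closing observation that purely formal manipulation of the tower only exhibits $L_n$ as a retract is a good explanation of why the lemma cannot be avoided.
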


This characterization of $n$-excisive $\infty$-categories \cite[Proposition 3.16]{H} combined with the equivalence $coGrp_n(\C)\simeq T_n(\C)$ immediately give our second main result.

\begin{thm}\label{layer}
Let $\C$ be a pointed, compactly generated $\infty$-category. Then $\C$ is $n$-excisive if and only if the  suspension functor $\Sigma_\C$ induces an equivalence
\[\Sigma:\C\xrightarrow{\sim} coGrp_n(\C) \] between $\C$ and the $\infty$-category of $A_n$-cogroup objects in $\C$.
\end{thm}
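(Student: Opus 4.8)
The plan is to read off the statement from two facts already in hand: Heuts's characterization \cite[Proposition 3.16]{H}, that $\C$ is $n$-excisive if and only if $L_n\colon\C\to T_n\C$ is an equivalence, and our Theorem \ref{cubeiscogroup}, which provides an equivalence $\pi_n^*\colon coGrp_n(\C)\xrightarrow{\sim}T_n\C$. Since an equivalence may be cancelled on either side, $L_n$ is an equivalence if and only if the composite $(\pi_n^*)^{-1}\circ L_n\colon\C\to coGrp_n(\C)$ is. The entire content of the theorem therefore lies in identifying this composite with the suspension functor, that is, in producing a natural equivalence $\pi_n^*\circ\Sigma\simeq L_n$ of functors $\C\to T_n\C$. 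Once this is done the equivalences chain together: $\C$ is $n$-excisive $\Longleftrightarrow L_n$ is an equivalence $\Longleftrightarrow\Sigma$ is an equivalence.

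First I would establish the identification vertexwise. Under the isomorphism $\square[n+1]_0\cong\Delta^{inj}/[n]$ of Lemma \ref{sliceiso}, a subset $S$ of cardinality $m$ corresponds to an object over $[m-1]\in\Delta_n$, so $\pi_n^*\Sigma(A)$ takes the value $(\Sigma A)[m-1]\simeq\bigvee_{m-1}\Sigma A$ at $S$, with structure maps the coproduct inclusions prescribed by Definition \ref{comonoiddef}. On the other hand $L_n(A)$ is, by construction, obtained by forgetting the initial vertex of the strongly cocartesian $(n+1)$-cube with initial vertex $A$ and a zero object at each cardinality-one subset; an iterated pushout computation, carried out for $n=2$ in Example \ref{example}, gives the value $\bigvee_{m-1}\Sigma A$ at each subset of cardinality $m$. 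Thus the two punctured cubes agree vertexwise, and both are special punctured cubes whose cardinality-one vertices vanish.

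To upgrade this to an equivalence of functors I would use the span $\C\xleftarrow{\sim}\overline{T_n}\C\to T_n\C$ through which $L_n$ is defined. Filling in the initial vertex of $\pi_n^*\Sigma(A)$ with $A$ produces a nonpunctured $(n+1)$-cube; the faces avoiding the initial vertex are pushouts because $\Sigma A$ is cogroup-like (condition $(2')$, equivalently Lemma \ref{colimsegal}), while the faces through the initial vertex are pushouts because $0\amalg_A 0\simeq\Sigma A$. Hence this filled cube lies in $\overline{T_n}\C$ and defines a section of the trivial fibration $\overline{T_n}\C\xrightarrow{\sim}\C$ whose composite with the forgetful leg is $\pi_n^*\Sigma$; since the space of such sections is contractible, this section is canonically equivalent to the one defining $L_n$, yielding $\pi_n^*\Sigma\simeq L_n$. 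I expect the main obstacle to be precisely this last identification carried out coherently in $A$: one must match the pinch comultiplication $\Sigma A\to\Sigma A\vee\Sigma A$ of the suspension cogroup with the counital structure extracted from the strongly cocartesian cube, exactly the structure analyzed by hand in Example \ref{case3}, and verify it is natural rather than merely pointwise.
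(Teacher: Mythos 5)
Your proposal is correct and follows essentially the same route as the paper: the paper also deduces Theorem \ref{layer} immediately by combining Heuts's characterization \cite[Proposition 3.16]{H} ($\C$ is $n$-excisive iff $L_n\colon\C\to T_n\C$ is an equivalence) with the equivalence $coGrp_n(\C)\simeq T_n(\C)$ of Theorem \ref{cubeiscogroup}. The only difference is that the paper treats the identification of $L_n$ with the lifted suspension as implicit in the construction (justified informally by Example \ref{example}), whereas you spell out that identification via the vertexwise computation and the contractibility of the space of sections of $\overline{T_n}\C\xrightarrow{\sim}\C$ --- a sound elaboration of a step the paper leaves to the reader.
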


\begin{cor}
Let $\C$ be a pointed, compactly generated $\infty$-category, then the suspension functor on $P_n\C$ induces an equivalence
\[P_n\C\xrightarrow{\sim} coGrp_n(P_n\C) \]
\end{cor}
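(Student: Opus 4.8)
The plan is to reduce the corollary to Theorem \ref{layer} by showing that $P_n\C$ is itself $n$-excisive, and then simply quoting that theorem with $P_n\C$ in place of $\C$. Since Theorem \ref{layer} asserts that an $\infty$-category $\cD$ in $\cat^\omega$ is $n$-excisive if and only if the suspension functor $\Sigma:\cD\xrightarrow{\sim} coGrp_n(\cD)$ is an equivalence, the entire content of the corollary is the claim that $P_n\C$ satisfies the hypotheses of Theorem \ref{layer} and is $n$-excisive.

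First I would verify that $P_n\C$ lives in $\cat^\omega$, i.e. that it is pointed and compactly generated, so that Theorem \ref{layer} applies to it. This should follow from the construction $P_n\C = \colim_k T_n^k\C$ as a filtered colimit in $\cat^\omega$, using that each $T_n$ preserves the relevant structure and that filtered colimits in $\cat^\omega$ stay within the category. Second, and this is the crux, I would show that $P_n\C$ is $n$-excisive. By \cite[Proposition 3.15]{H} the adjunction $\Sigma^\infty_n\C\rightleftarrows P_n\C:\Omega^\infty_n$ is an $n$-excisive approximation to $\C$, and by the definition of $n$-excisive approximation this requires precisely that the target $\infty$-category $P_n\C$ be $n$-excisive. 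Thus the $n$-excisiveness of $P_n\C$ is already built into the cited proposition.

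With these two facts in hand, the corollary is immediate: applying Theorem \ref{layer} to the pointed, compactly generated, $n$-excisive $\infty$-category $P_n\C$ yields that the suspension functor induces an equivalence
\[
P_n\C\xrightarrow{\sim} coGrp_n(P_n\C),
\]
which is exactly the assertion to be proved.

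The main obstacle I anticipate is purely bookkeeping rather than conceptual: one must confirm that $P_n\C$ genuinely lies in $\cat^\omega$ so that Theorem \ref{layer} is applicable, since that theorem is stated only for objects of $\cat^\omega$. Establishing that the filtered colimit defining $P_n\C$ preserves pointedness and compact generation is the one place where a small amount of care is needed, but it rests on standard closure properties of $\cat^\omega$ under filtered colimits rather than on any new idea. Once that is settled, the proof is a one-line invocation of the preceding theorem.
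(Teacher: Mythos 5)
Your proposal is correct and matches the paper's (implicit) reasoning exactly: the paper presents this corollary as an immediate consequence of Theorem \ref{layer}, relying on the fact that $P_n\C$ is $n$-excisive by \cite[Proposition 3.15]{H} (this is built into the definition of an $n$-excisive approximation) and lies in $\cat^\omega$ since it is constructed as a colimit in that category. Your extra care about verifying $P_n\C\in\cat^\omega$ is sensible bookkeeping but introduces no new ideas beyond what the paper intends.
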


Theorem \ref{layer} has a nice conceptual interpretation.  Of course in a stable $\infty$-category all objects may be desuspended, hence desuspension follows from the trivial $A_1$-cogroup structure on an object.  In the unstable setting, dual to the recognition theorem for loop spaces, one would assume a full $A_\infty$-cogroup structure is needed for a desuspension\footnote{We take on this idea in the case of spaces in Section \ref{spaces}.}.  What we have proved here is that in the intermediate $n$-excisive setting an $A_n$-cogroup structure suffices, and in fact this characterizes exactly the $n$-excisive setting.

It is interesting to reflect for a moment on the process of stabilization in a new light. When one stabilizes an $\infty$-category $\C$ they invert the suspension functor $\C\xrightarrow{\Sigma}\C$ by forming the colimit \[ \C\xrightarrow{\Sigma} \C\xrightarrow{\Sigma}\C\xrightarrow{\Sigma}\cdots \rightarrow P_1\C\]
This process forgets a lot of information about the values of the suspension functor, in particular the $A_\infty$-cogroup structure that they carry. We can carry out a less harshly forgetful inversion by remembering a part of this structure, say the $A_n$-cogroup structure, and then inverting the suspension functor by forming the colimit
\[ \C\xrightarrow{\Sigma} coGrp_n(\C)\xrightarrow{\Sigma}coGrp_n(coGrp_n(\C))\xrightarrow{\Sigma}\cdots \rightarrow P_n\C\]
Theorem \ref{layer} says that forming the $n$-excisive approximation $P_n\C$ is exactly this procedure.  We can also give the following slight weakening

\begin{prop}
Let $\C$ be a pointed, compactly generated $\infty$-category, then $1_\C$ is $n$-excisive if and only if the suspension functor $\Sigma: \C\rightarrow coGrp_n(\C)$ is fully faithful.
\end{prop}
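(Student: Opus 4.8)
The plan is to deduce this from the two main results by separating the equivalence of Theorem \ref{layer} into its fully-faithful and essentially-surjective halves. First I would use Theorem \ref{cubeiscogroup} to identify $\Sigma\colon\C\to coGrp_n(\C)$ with Heuts's functor $L_n\colon\C\to T_n\C$, so that the proposition becomes the assertion that $1_\C$ is $n$-excisive precisely when $L_n$ is fully faithful. I would also record the formal reformulation of the left-hand side: since the adjunction $\Sigma^\infty_n\dashv\Omega^\infty_n$ is an $n$-excisive approximation by \cite[Proposition 3.15]{H}, it is in particular a weak one, so $P_n(1_\C)\xrightarrow{\sim}\Omega^\infty_n\Sigma^\infty_n$. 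Consequently $1_\C$ is $n$-excisive if and only if the unit $1_\C\to\Omega^\infty_n\Sigma^\infty_n$ is an equivalence, that is, if and only if $\Sigma^\infty_n$ is fully faithful.

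For the implication assuming $L_n$ fully faithful, I would argue at the level of functor calculus. A single application of Goodwillie's operation to the identity is computed by Heuts's adjoints, $T_n(1_\C)\simeq R_nL_n$, because $T_n(1_\C)(X)$ is the homotopy limit of the punctured suspension cube $L_n(X)$, which by definition is $R_nL_n(X)$. Full faithfulness of $L_n$ says exactly that the unit $1_\C\to R_nL_n$ is an equivalence, so $T_n(1_\C)\simeq 1_\C$. Since $T_n$ carries equivalences of functors to equivalences, iterating gives $T_n^{k}(1_\C)\simeq 1_\C$ with invertible transition maps, whence $P_n(1_\C)=\colim_k T_n^{k}(1_\C)\simeq 1_\C$ and $1_\C$ is $n$-excisive.

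For the converse I would assume $1_\C$ is $n$-excisive, so $\Sigma^\infty_n$ is fully faithful by the reformulation above, and then transport this back to $L_n$ using naturality and Theorem \ref{layer} for the $\infty$-category $P_n\C$. As a morphism in $\cat^\omega$ the functor $\Sigma^\infty_n$ preserves zero objects and pushouts, hence carries special punctured cubes to special punctured cubes and is natural for $L_n$: there is an equivalence $T_n(\Sigma^\infty_n)\circ L_n\simeq L_n\circ\Sigma^\infty_n$ of functors $\C\to T_n(P_n\C)$. The target $P_n\C$ is $n$-excisive, so $L_n\colon P_n\C\to T_n(P_n\C)$ is an equivalence by \cite[Proposition 3.16]{H} (equivalently by the corollary to Theorem \ref{layer}); thus the right-hand composite is fully faithful. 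On the left, post-composition $T_n(\Sigma^\infty_n)$ is fully faithful because $T_n$ preserves fully faithful functors, and a fully faithful composite with a fully faithful left factor forces the right factor $L_n\colon\C\to T_n\C$ to be fully faithful. This is the desired conclusion.

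The main obstacle is not the formal homotopy-theoretic manipulation but the two compatibilities that glue Goodwillie's functor calculus to Heuts's categorical tower: the identification $T_n(1_\C)\simeq R_nL_n$ used in the first implication, and the naturality square $T_n(\Sigma^\infty_n)\circ L_n\simeq L_n\circ\Sigma^\infty_n$ together with the preservation of full faithfulness by $T_n$ used in the second. Both require care in checking that $\Sigma^\infty_n$, being colimit-preserving, really does send special punctured cubes to special punctured cubes so that $T_n(\Sigma^\infty_n)$ is defined and inherits full faithfulness, and that the homotopy limit defining $R_n$ agrees with the cross-effect limit defining $T_n(1_\C)$.
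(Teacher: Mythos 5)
Your proof is correct, but it takes a genuinely different (and substantially more complete) route than the paper, whose entire proof is one sentence: the proposition ``follows from Theorem \ref{layer} since $P_n\C$ is a filtered colimit of fully faithful functors $\Sigma$ starting from $\C$.'' That is, the paper reasons directly about the tower $\C\to T_n\C\to T_n^2\C\to\cdots$ with colimit $P_n\C$, implicitly equating full faithfulness of the transition functors with full faithfulness of the leg $\Sigma^\infty_n\colon\C\to P_n\C$, which (as you also observe via \cite[Proposition 3.15]{H}) is equivalent to $n$-excisiveness of $1_\C$. Your argument replaces this appeal to the filtered-colimit structure by two separate mechanisms: the identification $T_n(1_\C)\simeq R_nL_n$ plus the unit criterion for full faithfulness of a left adjoint in one direction, and the naturality square $T_n(\Sigma^\infty_n)\circ L_n\simeq L_n\circ\Sigma^\infty_n$ together with $n$-excisiveness of $P_n\C$ (Proposition 3.16 of Heuts, equivalently the corollary to Theorem \ref{layer}) and two-out-of-three in the other. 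What your version buys is rigor exactly where the one-line proof is thin: full faithfulness of the composite $\Sigma^\infty_n$ does not formally imply full faithfulness of its first factor $L_n$, and full faithfulness of the first transition functor does not by itself yield full faithfulness of the later transition functors $T_n^k\C\to T_n^{k+1}\C$; your two mechanisms are precisely what close these gaps. The cost is the two compatibilities you flag, but both are standard: $\Sigma^\infty_n$ is a pointed left adjoint, hence preserves special punctured cubes, and Heuts's construction is set up so that $T_n$ of the identity is computed by $R_nL_n$. One simplification worth noting: your second direction can also be run through $T_n(1_\C)\simeq R_nL_n$, since an $n$-excisive functor is fixed by $T_n$; thus $1_\C$ being $n$-excisive makes the unit $1_\C\to R_nL_n$ an equivalence directly, which makes the two directions symmetric and avoids invoking $P_n\C$ at all.
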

\begin{proof}
This follows from Theorem \ref{layer} since $P_n\C$ is a filtered colimit of fully faithful functors $\Sigma$ starting from $\C$.
\end{proof}

Our goal now is to push Theorem \ref{layer} to the limit and study $A_\infty$-cogroup objects in the limit of the tower of excisive approximations. To begin we will state some technical results in the following common setup.

\begin{notation}\label{setup} Consider a chain of left adjoint functors of compactly generated $\infty$-categories \[ \dots\xrightarrow{f_{n+2}} \C_{n+1}\xrightarrow{f_{n+1}} \C_n\xrightarrow{f_n} \dots\rightarrow C_1 \] Let $\C$ denote the limit, and $\pi_n:\C\rightarrow \C_n$ the $n^{th}$ projection functor.
\end{notation}

\begin{defn}
We say a family of functors $f_\alpha:C\to C_\alpha$ jointly creates colimits (or equivalences) if the induced functor $f:C\to \prod_\alpha C_\alpha$ creates colimits (resp. equivalences).
\end{defn}

\begin{lem}\label{jointcolimits}
In the setup of Notation \ref{setup} the functors $\C\xrightarrow{\pi_n}\C_n$ jointly create equivalences and colimits.
\end{lem}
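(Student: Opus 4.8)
The plan is to unwind the definition of ``jointly create'': I must show that the single functor $\pi:\C\to\prod_n\C_n$ assembled from the projections $\pi_n$ creates both equivalences and colimits. The equivalences are the easy half. Since $\C=\lim_n\C_n$ is a limit of $\infty$-categories, a morphism $\phi$ of $\C$ is an equivalence precisely when each $\pi_n(\phi)$ is one; this is the standard fact that mapping spaces, and hence equivalences, in a limit $\infty$-category are detected by the projections. As the equivalences in $\prod_n\C_n$ are exactly the componentwise ones, this says precisely that $\pi$ is conservative, i.e. that it creates equivalences.

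The substance lies in the colimits, and the single input I would lean on is that every transition functor $f_n$ is a left adjoint (as recorded in Notation \ref{setup}) and therefore preserves all colimits. I would argue that colimits in $\C$ are computed levelwise. Concretely, let $p:K\to\C$ be a diagram such that $\pi\circ p$ admits a colimit in $\prod_n\C_n$; since colimits in a product are formed factorwise, this means each composite $p_n:=\pi_n\circ p:K\to\C_n$ admits a colimit $c_n:=\colim p_n$. I then assemble the candidate object $\tilde c=(c_n)_n$. The point is that $\tilde c$ genuinely lands in $\C=\lim_n\C_n$: because $p$ is valued in the limit there are coherent equivalences $f_{n+1}\circ p_{n+1}\simeq p_n$, and because $f_{n+1}$ preserves colimits we obtain $f_{n+1}(c_{n+1})\simeq\colim(f_{n+1}\circ p_{n+1})\simeq\colim p_n=c_n$, which is exactly the compatibility an object of the limit must satisfy.

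It then remains to check that $\tilde c$ is the colimit of $p$ and that $\pi$ both preserves and reflects it. For the first point I would compute, for any $Y=(Y_n)_n\in\C$,
\[
\mathrm{Map}_\C(\tilde c,Y)\simeq\lim_n\mathrm{Map}_{\C_n}(c_n,Y_n)\simeq\lim_n\lim_{k\in K^{op}}\mathrm{Map}_{\C_n}(p_n(k),Y_n)\simeq\lim_{k\in K^{op}}\mathrm{Map}_\C(p(k),Y),
\]
using that mapping spaces in a limit are the limit of mapping spaces, that each $c_n$ is a colimit, and that limits commute with limits; this exhibits $\tilde c$ as $\colim_K p$ with the evident cocone, and by construction $\pi(\tilde c)=(c_n)_n=\colim(\pi\circ p)$, so $\pi$ preserves it. Reflection is then immediate from the conservativity established above, since the comparison map out of any cocone that $\pi$ carries to a colimit is a levelwise equivalence and hence an equivalence in $\C$. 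Together these yield creation of colimits.

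The main obstacle is not the idea but the coherence bookkeeping: turning the pointwise formulas above into a genuine statement about coherent diagrams, i.e. verifying that the levelwise colimits assemble into an honest object of, and colimit cocone in, the limit $\infty$-category $\C$. Rather than manipulate the sequences $(c_n)$ by hand, I would phrase $\C=\lim_n\C_n$ through its description as the $\infty$-category of (Cartesian) sections of the associated fibration and invoke the general principle that, when the transition functors preserve $K$-indexed colimits, such colimits in a limit of $\infty$-categories are computed and detected pointwise. A preliminary remark is also needed to license the mapping-space computation: because the $f_n$ are left adjoints, the underlying $\infty$-category of the limit formed in $\cat^\omega$ (equivalently in $\mathrm{Pr}^{L}$) agrees with the limit formed in $\cat_\infty$, which is the object to which the argument above applies.
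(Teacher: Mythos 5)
Your proposal is correct in substance, but it relates to the paper's proof in an unusual way: the paper's entire proof is a citation — it appeals to \cite[Theorem 1.1]{RVC} (or the discussion around \cite[Proposition 8.2.11]{RVE}), which are general creation theorems for limits of towers of $\infty$-categories — whereas what you have written is essentially an exposition of the argument that lies behind such theorems. Your two key steps (equivalences are detected levelwise because mapping spaces, equivalently cores, in a limit of $\infty$-categories are themselves limits; colimits are computed levelwise because the transition functors, being left adjoints, preserve them, so the levelwise colimits assemble into an object of the limit whose universal property follows from your mapping-space calculation, with reflection then following from conservativity) are exactly the content of the cited results. This buys transparency — it makes visible precisely where the left-adjointness hypothesis of Notation \ref{setup} enters — but it is not a fully independent alternative: at the crucial coherence step you yourself invoke ``the general principle'' for Cartesian sections of the associated fibration, which is the very statement the paper outsources to Riehl--Verity. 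One caveat deserves more care than either you or the paper give it: the cited creation theorems concern the limit of the tower of underlying $\infty$-categories (the limit computed in $\cat$), whereas the paper takes its limit in $\cat^\omega$, which it elsewhere computes as Ind-objects of the limit of the compact objects. Your justification that these agree ``because the $f_n$ are left adjoints'' only identifies the $\mathrm{Pr}^L$-limit with the limit in $\cat$ (\cite[Proposition 5.5.3.13]{HTT}); identifying the $\cat^\omega$-limit with that requires in addition that the transition functors preserve compact objects and is part of the content of \cite[Appendix A]{H}, not a formal consequence of adjointness. Since the paper silently makes the same identification, this is a shared elision rather than an error on your part, but it is the one genuinely non-formal input that your write-up treats as automatic.
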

\begin{proof}
This follows from \cite[Theorem 1.1]{RVC} or as explained in the (not static!) discussion at the end of the proof of \cite[Proposition 8.2.11]{RVE}.
\end{proof}

\begin{prop}\label{createcomonoid}
In the setup of Notation \ref{limit}, for any $n$ there are equivalences of $\infty$-categories
\[ coMon_n(C)\xrightarrow{\sim} lim_k coMon_n(C_k) \]
\end{prop}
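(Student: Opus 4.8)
The plan is to combine the formal compatibility of functor $\infty$-categories with limits and the joint creation properties recorded in Lemma \ref{jointcolimits}. First I would note the equivalence
\[ Fun(\Delta_n, \C) \xrightarrow{\sim} \lim_k Fun(\Delta_n, \C_k), \]
which holds because $\C \simeq \lim_k \C_k$ and $Fun(\Delta_n, -)$, being a cotensor, preserves limits of $\infty$-categories. Since each inclusion $coMon_n(\C_k)\hookrightarrow Fun(\Delta_n, \C_k)$ is fully faithful and a limit of fully faithful functors remains fully faithful, $\lim_k coMon_n(\C_k)$ sits inside the right-hand side as the full subcategory of those compatible families all of whose components are $A_n$-comonoids. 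It then suffices to prove that, under the displayed equivalence, an object $A\in Fun(\Delta_n,\C)$ is an $A_n$-comonoid if and only if each $\pi_k A$ is one.

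The ``only if'' direction is immediate: by Lemma \ref{jointcolimits} each projection $\pi_k$ preserves colimits, and since the categories involved are pointed it also preserves the zero object; as conditions $(1)$–$(3)$ of Definition \ref{comonoiddef} are expressed solely through zero objects, coproducts, and the induced counit maps, each $\pi_k$ carries $A_n$-comonoids to $A_n$-comonoids. This produces the desired functor $coMon_n(\C)\to \lim_k coMon_n(\C_k)$, and being the restriction of a fully faithful functor to full subcategories it is automatically fully faithful.

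The content lies in essential surjectivity, i.e.\ the ``if'' direction. Given a compatible family $A$ with each $\pi_k A$ a comonoid, I would verify the three conditions of Definition \ref{comonoiddef} on $A$ by testing them along the family $\{\pi_k\}$. One first observes that $\C$ has the finite colimits needed even to phrase the conditions, since these are jointly created from the cocomplete $\C_k$ along the colimit-preserving transition functors. For condition $(2)$ the coproduct $\coprod A[1]$ is then computed coordinatewise, so the Segal comparison map in $\C$ projects to the Segal map of $\pi_k A$ in each $\C_k$, which is an equivalence by hypothesis; since the $\pi_k$ jointly create equivalences, the comparison map is an equivalence in $\C$. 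The counit maps of condition $(3)$ are handled identically. For condition $(1)$, the canonical map $0_\C \to A[0]$ out of the zero object projects in each $\C_k$ to a map between zero objects, hence to an equivalence, so it is an equivalence by joint creation of equivalences and $A[0]$ is a zero object.

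I expect the principal difficulty to be organizational rather than conceptual: one must rephrase each clause of the comonoid definition purely as the assertion that a specified colimit cone is a colimit, or that a specified comparison map is an equivalence, so that Lemma \ref{jointcolimits} applies to those cones and maps rather than to the diagram $A$ as a whole. The most delicate point is the zero-object condition, which I would express as the invertibility of the canonical map out of the initial object, so that it too becomes a statement about equivalences that the projections jointly create.
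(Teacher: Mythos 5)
Your proposal is correct and follows essentially the same route as the paper: pass to the equivalence $Fun(\Delta_n,\C)\xrightarrow{\sim}\lim_k Fun(\Delta_n,\C_k)$ and then use Lemma \ref{jointcolimits} (joint creation of colimits and equivalences by the projections) to see that the defining conditions of an $A_n$-comonoid hold in $\C$ if and only if they hold after each projection, so the equivalence restricts to the full subcategories of comonoids. The paper's proof is just a terser version of this; your elaboration of the zero-object and Segal conditions fills in exactly the steps the paper leaves implicit.
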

\begin{proof}
There is an equivalence  
\[Fun(\Delta_n, \C)\xrightarrow{\sim} lim_k Fun(\Delta_n, \C_k) \]
which we must show lifts to an equivalence 
\[coMon_n(\C)\xrightarrow{\sim} lim_k coMon_n(C_k) \]
By Lemma \ref{jointcolimits} the legs of the limit cone $\C\rightarrow \C_k$ create colimits and equivalences, so they create the Segal condition (2') of Definition \ref{cogroup}.
\end{proof}

Now that we understand how to form $A_n$-cogroup objects in the limit of a tower of $\infty$-categories each fixed $n$, we deal with the limit as $n$ goes to infinity.

\begin{lem}\label{filterinfinity}
Let $\C$ be an $\infty$-category. The sequence of forgetful functors 
\[ coMon_{\infty}(\C)\rightarrow\dots \rightarrow coMon_2(\C)\rightarrow coMon_1(\C) \]
induces an equivalence \[  coMon_{\infty}(\C)\overset{\sim}{\longrightarrow} lim_k coMon_k(\C) \]
\end{lem}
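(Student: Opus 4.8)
The plan is to realize both $coMon_\infty(\C)$ and $lim_k\, coMon_k(\C)$ as full subcategories of the same functor $\infty$-category $Fun(\Delta^{inj},\C)$, and then to check that they contain the same objects. Here $\Delta^{inj}$ plays the role of ``$\Delta_\infty$'': $coMon_\infty(\C)$ is the full subcategory of $Fun(\Delta^{inj},\C)$ cut out by imposing the conditions of Definition \ref{comonoiddef} for all $t$, and the forgetful functors in the statement are restriction along the inclusions $\Delta_k\hookrightarrow\Delta^{inj}$.

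First I would record that $\Delta^{inj}$ is the filtered union of its full subcategories $\Delta_k$: any injection $[a]\to[b]$ lies in $\Delta_{\max(a,b)}$, so $\Delta^{inj}=\colim_k \Delta_k$ along the fully faithful (monomorphic) inclusions $\Delta_k\hookrightarrow\Delta_{k+1}$. Since $Fun(-,\C)$ carries colimits of $\infty$-categories to limits — and the colimit is a homotopy colimit since the transition maps are cofibrations, while the dual restriction maps are isofibrations so the limit is a homotopy limit — this produces an equivalence
\[ Fun(\Delta^{inj},\C)\xrightarrow{\sim} lim_k\, Fun(\Delta_k,\C) \]
which is by construction compatible with the restriction functors.

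Next I would compare the two candidate full subcategories. The full inclusions $coMon_k(\C)\hookrightarrow Fun(\Delta_k,\C)$ assemble into a map of towers (restricting an $A_{k+1}$-comonoid to $\Delta_k$ inherits all conditions for $t\le k$, so it lands in $coMon_k(\C)$). Because a limit of fully faithful functors is again fully faithful — mapping spaces in a limit of $\infty$-categories are the limit of the mapping spaces, so an objectwise equivalence of mapping spaces remains one after passing to the limit — the induced functor
\[ lim_k\, coMon_k(\C)\to lim_k\, Fun(\Delta_k,\C)\simeq Fun(\Delta^{inj},\C) \]
is fully faithful. Under the equivalence, a functor $A:\Delta^{inj}\to\C$ lies in the essential image of this subcategory precisely when every restriction $A|_{\Delta_k}$ is an $A_k$-comonoid.

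It then remains to identify the objects of $coMon_\infty(\C)$ and of $lim_k\, coMon_k(\C)$ inside $Fun(\Delta^{inj},\C)$, and this is the only point requiring care — the \emph{main obstacle} being to confirm that ``being an $A_\infty$-comonoid'' is a genuinely stagewise condition, so that taking the limit of full subcategories does not secretly enlarge (or shrink) the essential image. The key observation is that each defining axiom is finitary: condition (1) concerns only $A[0]$, condition (3) concerns only the finite diagram on $[1],[2]$, and the Segal/pushout condition for a fixed $t$ involves only objects indexed by subsets of $[t]$, hence lies entirely in $\Delta_t$. Consequently the condition for a given $t$ holds for $A$ if and only if it holds for $A|_{\Delta_t}$, so $A$ is an $A_\infty$-comonoid if and only if $A|_{\Delta_k}$ is an $A_k$-comonoid for every $k$. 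The two full subcategories of $Fun(\Delta^{inj},\C)$ therefore have the same objects, and being full subcategories of a common $\infty$-category they coincide. Tracing this through the equivalences shows that the canonical comparison functor $coMon_\infty(\C)\to lim_k\, coMon_k(\C)$ induced by the forgetful functors is an equivalence, as claimed.
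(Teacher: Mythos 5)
Your proof is correct, but it takes a different route from the paper, which disposes of this lemma in one line by citing it as dual to the unital version of \cite[Proposition 4.1.4.9]{HA}. Your argument is direct and self-contained: write $\Delta^{inj}$ as the increasing union $\colim_k \Delta_k$, note that $Fun(-,\C)$ converts this colimit into the limit of a tower of isofibrations, so $Fun(\Delta^{inj},\C)\simeq \lim_k Fun(\Delta_k,\C)$, and then observe that the defining conditions of Definition \ref{comonoiddef} are finitary --- the condition at stage $t$ only sees $\Delta_t$ --- so that $coMon_\infty(\C)$ and $\lim_k coMon_k(\C)$ are the \emph{same} full subcategory of $Fun(\Delta^{inj},\C)$. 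This stagewise mechanism is essentially what underlies the cited result as well, so the two arguments agree in spirit; the difference is that the paper outsources the coherence bookkeeping to Lurie while you make it explicit, which has the side benefit of pinning down what $coMon_\infty(\C)$ means (the paper never defines it; your reading, functors $\Delta^{inj}\to\C$ satisfying the conditions for all $t$, is the intended one). Two points worth a line each to make your write-up airtight: (i) for the strict limit $\lim_k coMon_k(\C)$ to compute the $\infty$-categorical limit appearing in the statement, you should note that the forgetful functors $coMon_{k+1}(\C)\to coMon_k(\C)$ are themselves isofibrations, which holds because each $coMon_k(\C)$ is a full subcategory of $Fun(\Delta_k,\C)$ closed under equivalence; (ii) your appeal to ``a limit of fully faithful functors is fully faithful'' can be bypassed entirely, since the strict limit of these full, equivalence-closed simplicial subsets is literally the full simplicial subset of $\lim_k Fun(\Delta_k,\C)\cong Fun(\Delta^{inj},\C)$ spanned by the functors whose restrictions are all comonoids. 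Neither point is a substantive gap.
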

\begin{proof}
This is dual to the unital version of \cite[Prop 4.1.4.9]{HA}.
\end{proof}

Because being cogroup-like can be tested already at the level of the homotopy category, combining Proposition \ref{createcomonoid} and Lemma \ref{filterinfinity} we obtain:

\begin{prop}\label{comonoidsinlimit}
In the setup of Notation \ref{setup} there is an equivalence of $\infty$-categories
\[ coMon_{\infty}(\C)\xrightarrow{\sim} lim_{k,n} coMon_k(\C_n) \] which decends to an equivalence \[ coGrp_{\infty}(\C)\xrightarrow{\sim} lim_{k,n} coGrp_k(\C_n) \] on the level of cogroup objects.
\end{prop}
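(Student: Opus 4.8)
The plan is to deduce the double-limit equivalence by combining the two limit computations already in hand—Lemma \ref{filterinfinity}, which resolves the limit along the $A_k$-direction, and Proposition \ref{createcomonoid}, which resolves the limit along the tower direction—and then interchanging the two limits by the Fubini theorem for limits of $\infty$-categories. The descent to cogroup objects will then be a formal consequence of the fact that being cogroup-like is a property detected componentwise.

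First I would apply Lemma \ref{filterinfinity} to the limit category $\C$ itself, producing an equivalence $coMon_\infty(\C)\xrightarrow{\sim} lim_k coMon_k(\C)$, where here $k$ indexes the $A_k$-level. Next, for each fixed level $k$, Proposition \ref{createcomonoid} (applied with the $A$-level held fixed at $k$ and the limit taken over the tower) supplies an equivalence $coMon_k(\C)\xrightarrow{\sim} lim_n coMon_k(\C_n)$. The comparison maps in both results are the canonical functors induced by the universal properties of the relevant limits, and the forgetful functors $coMon_k\to coMon_{k-1}$ commute both with restriction along the tower projections $\pi_n$ and with the formation of limits; hence these equivalences are natural in $k$ and assemble into an equivalence of towers. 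Taking $lim_k$ of both sides and then applying the Fubini interchange to identify the iterated limit with the limit over the product poset yields
\[ coMon_\infty(\C)\xrightarrow{\sim} lim_k coMon_k(\C)\xrightarrow{\sim} lim_k lim_n coMon_k(\C_n)\xrightarrow{\sim} lim_{k,n} coMon_k(\C_n), \]
which is the first claimed equivalence.

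For the descent, I would show that under this equivalence the full subcategory $coGrp_\infty(\C)$ corresponds exactly to $lim_{k,n} coGrp_k(\C_n)$. Being cogroup-like is the pushout property $(2')$ of Definition \ref{cogroup}, a property rather than additional structure, so it suffices to check that it is detected componentwise. By Lemma \ref{jointcolimits} the projections $\pi_n:\C\to\C_n$ jointly create colimits, so a comonoid object in $\C$ satisfies $(2')$ precisely when each of its images in the $\C_n$ does. Therefore an object of $coMon_\infty(\C)$ lies in $coGrp_\infty(\C)$ exactly when the corresponding object of $lim_{k,n} coMon_k(\C_n)$ has every component cogroup-like, which is the defining condition of $lim_{k,n} coGrp_k(\C_n)$ as a full subcategory. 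Restricting the equivalence gives the second claim.

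The main obstacle I anticipate is the naturality bookkeeping in the middle step: verifying that the equivalences furnished by Proposition \ref{createcomonoid} are genuinely compatible with the forgetful functors in the $k$-direction, so that they form a morphism of towers and not merely a levelwise family of equivalences. Once that compatibility is recorded, the Fubini interchange and the cogroup-like descent are purely formal.
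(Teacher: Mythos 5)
Your proposal is correct and follows essentially the same route as the paper, which obtains the result precisely by combining Lemma \ref{filterinfinity} with Proposition \ref{createcomonoid} and then descending to cogroup objects using the fact that being cogroup-like is a property rather than structure. Your justification of the descent via Lemma \ref{jointcolimits} (componentwise detection of the pushout condition $(2')$) is a sound, and if anything more explicit, version of the paper's remark that cogroup-likeness is detected at the level of the homotopy category.
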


We will now apply Proposition \ref{comonoidsinlimit} to the special case of Goodwillie calculus. Recall that there is a sequence of compact object preserving left adjoint functors
\[\dots\xrightarrow{\Sigma_{n+2, n+1}}P_{n+1}\C\xrightarrow{\Sigma_{n+1, n+}}P_n\C\xrightarrow{\Sigma_{n, n-1}}\dots P_1\C \simeq Sp(\C)\]

We let $P_\infty \C$ denote the limit of this tower in the category $\cat^\omega$, which is complete by \cite[Appendix A]{H}.  In fact one can compute this limit by passing to the sequence \[\dots\xrightarrow{\Sigma_{n+2, n+1}}(P_{n+1}\C)^c\xrightarrow{\Sigma_{n+1, n+}}(P_n\C)^c\xrightarrow{\Sigma_{n, n-1}}\dots (P_1\C)^c \simeq Sp(\C^c)\]
computing the limit in $\cat$, and taking Ind-objects of the resulting category.

\begin{thm}\label{limit}
Let $\C$ be a pointed, compactly generated $\infty$-category. The suspension functor \[\Sigma: P_\infty\C\overset{\sim}{\longrightarrow} coGrp_\infty(P_\infty\C)\] is an equivalence of $\infty$-categories.
\end{thm}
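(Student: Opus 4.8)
The plan is to feed the Goodwillie tower into Proposition \ref{comonoidsinlimit} and then collapse the resulting double limit onto its diagonal, where Theorem \ref{layer} applies termwise. Taking $\C_n = P_n\C$ in the setup of Notation \ref{setup}, so that the limit of the tower is $P_\infty\C$, Proposition \ref{comonoidsinlimit} supplies an equivalence
\[ coGrp_\infty(P_\infty\C)\xrightarrow{\sim} lim_{k,n} coGrp_k(P_n\C). \]
The right-hand side is the limit of a diagram indexed by $\mathbb{N}^{op}\times\mathbb{N}^{op}$, with transition maps given in the $k$-direction by the forgetful functors $coGrp_{k}\to coGrp_{k-1}$ and in the $n$-direction by the functors induced on cogroup objects by the tower maps $\Sigma_{n,n-1}$. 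Since the diagonal $\mathbb{N}\to\mathbb{N}\times\mathbb{N}$ is cofinal, this limit may be computed along the diagonal, yielding an equivalence
\[ lim_{k,n} coGrp_k(P_n\C)\xrightarrow{\sim} lim_m coGrp_m(P_m\C). \]

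Next I would apply Theorem \ref{layer} to each factor. Each $P_m\C$ is $m$-excisive, being the target of an $m$-excisive approximation, so the suspension functor $\Sigma_{P_m\C}: P_m\C\to coGrp_m(P_m\C)$ is an equivalence. These equivalences are compatible with the transition maps of the diagonal tower: the functors $\Sigma_{m,m-1}$ are colimit-preserving left adjoints and hence commute with suspension, while the forgetful functors $coGrp_m\to coGrp_{m-1}$ respect the suspension-to-cogroup assignment. They therefore assemble into an equivalence on limits
\[ lim_m P_m\C \xrightarrow{\sim} lim_m coGrp_m(P_m\C), \]
whose source is by definition $P_\infty\C$. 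Composing the three displayed equivalences produces an equivalence $P_\infty\C\xrightarrow{\sim} coGrp_\infty(P_\infty\C)$.

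It remains to identify this composite with the suspension functor $\Sigma$ of the statement, rather than with a merely abstract equivalence of underlying $\infty$-categories. Suspension is natural with respect to colimit-preserving functors, and by Lemma \ref{jointcolimits} the projections $\pi_m: P_\infty\C\to P_m\C$ jointly create colimits, hence compute suspension in $P_\infty\C$ levelwise; thus each $\pi_m$ intertwines $\Sigma_{P_\infty\C}$ with $\Sigma_{P_m\C}$. This is exactly the compatibility needed to recognize the diagonal of the termwise equivalences $(\Sigma_{P_m\C})$ as the image of $\Sigma_{P_\infty\C}$ under the identification of Proposition \ref{comonoidsinlimit}.

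The step I expect to be the main obstacle is precisely this last identification: one must verify that the equivalence of Proposition \ref{comonoidsinlimit} carries the genuine suspension functor $\Sigma_{P_\infty\C}$, landing in $coGrp_\infty(P_\infty\C)$, to the compatible system of levelwise suspensions landing in the various $coGrp_k(P_n\C)$, so that the diagonal collapse and the termwise application of Theorem \ref{layer} reproduce the suspension functor itself. The naturality of suspension together with the levelwise computation afforded by Lemma \ref{jointcolimits} are the ingredients that should control this bookkeeping.
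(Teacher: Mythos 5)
Your proposal is correct and follows essentially the same route as the paper: apply Proposition \ref{comonoidsinlimit} to the tower $\{P_n\C\}$, collapse the double limit $lim_{k,n}\,coGrp_k(P_n\C)$ along the (co)final diagonal, assemble the termwise equivalences $\Sigma: P_m\C \xrightarrow{\sim} coGrp_m(P_m\C)$ from Theorem \ref{layer} into an equivalence of towers, and use Lemma \ref{jointcolimits} to identify the resulting functor with the suspension on $P_\infty\C$. The only difference is the order in which you compose these steps, which is immaterial.
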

\begin{proof}
The equivalences $P_n(\C)\xrightarrow{\Sigma} coGrp_n(P_n\C)$ assemble into a tower of equivalences

\begin{center}
\begin{tikzcd}
P_{\infty}\C\ar[d, dotted]\ar[r, dotted]      &   lim_n coGrp_n(P_n\C)\ar[d, dotted] \\
P_{n+1}\C\ar[d]\ar[r, "\sim"', "\Sigma"]   &   coGrp_{n+1}(P_{n+1}\C)\ar[d] \\
P_{n}\C\ar[d]\ar[r, "\sim"', "\Sigma"]     &   coGrp_{n}(P_{n}\C)\ar[d] \\
P_{1}\C\ar[r, "\sim"', "\Sigma"]           &   coGrp_{1}(P_{1}\C) 
\end{tikzcd}
\end{center}

Providing an equivalence $P_{\infty} \xrightarrow{\sim} lim_n coGrp_n(P_n\C)$.  Because the functors $P_{\infty}\C\rightarrow P_n\C$ jointly create colimits we may identify this functor on underlying objects with the suspension $\Sigma: P_\infty\C\rightarrow P_{\infty}\C$.

On the other hand, the family of functors $P_{\infty}\C\xrightarrow{\pi_n}P_n\C$ satisfy the setup of Notation \ref{setup}, and so Proposition \ref{comonoidsinlimit} provides an equivalence  \[ coGrp_\infty(P_{\infty}\C) \xrightarrow{\sim} lim_{k,n}coGrp_k(P_n\C)\]
However, the diagonal functor $\Delta: \mathbb{N}\rightarrow \mathbb{N}\times\mathbb{N}$ is final, inducing an equivalence \[ lim_ncoGrp_n(P_n\C)\xrightarrow{\sim} lim_{k,n}coGrp_k(P_n\C) \] Providing the desired equivalence $\Sigma: P_{\infty}\C\xrightarrow{\sim} coGrp_{\infty}(P_{\infty}\C)$ which finishes the proof of Theorem \ref{limit}.
\end{proof}

As in the discussion following Theorem \ref{layer}, one could attempt to invert the suspension with its full structure 
\[ \C\xrightarrow{\Sigma} coGrp_\infty(\C)\xrightarrow{\Sigma}coGrp_\infty(coGrp_\infty(\C))\xrightarrow{\Sigma}\cdots \rightarrow \hat{P}_\infty\C\]
defining an $\infty$-category $\hat{P}_\infty\C$ as the colimit. This begs the question question: are there equivalences $\C\simeq \hat{P}_\infty\C$ or $\hat{P}_\infty\C\simeq P_\infty\C$?
Unfortunately neither of these seem to be true.  There is a natural map $\hat{P}_\infty\C \rightarrow P_\infty\C$, which can be obtained for example by contemplating the following $\mathbb{N}\times \mathbb{N}^{op}$ indexed diagram $\cJ_{\bullet, \bullet}$.
\begin{center}
\begin{tikzcd}[row sep = 10pt, column sep=7pt]
\vdots \arrow[d,equals]       &  & \vdots \arrow[d]                 &  & \vdots \arrow[d]                         &        \\
C \arrow[d, equals] \arrow[rr] &  & coGrp_3(\C) \arrow[rr] \arrow[d] &  & coGrp_3(coGrp_3(\C)) \arrow[r] \arrow[d] & \cdots \\
C \arrow[d, equals] \arrow[rr] &  & coGrp_2(\C) \arrow[rr] \arrow[d] &  & coGrp_2(coGrp_2(\C)) \arrow[r] \arrow[d] & \cdots \\
C \arrow[rr]           &  & coGrp_1(\C) \arrow[rr]           &  & coGrp_1(coGrp_1(\C)) \arrow[r]           & \cdots
\end{tikzcd}
\end{center}

\begin{lem}
In the diagram $\cJ_{\bullet, \bullet}$ above there are natural equivalences
\[ \hat{P}_\infty\C \simeq colim_n lim_m \cJ_{n,m} \text{       and       }  P_\infty\C \simeq lim_n colim_m \cJ_{n,m} \]
In particular the natural functor  $colim_n lim_m \cJ_{n,m}\rightarrow lim_n colim_m \cJ_{n,m}$ induces a functor $\hat{P}_\infty\C \rightarrow P_\infty\C$ which factors the comparison functor $\C\rightarrow P_\infty\C$.
\end{lem}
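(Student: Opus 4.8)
The plan is to identify each of the two iterated expressions with one of $\hat P_\infty\C$ and $P_\infty\C$ by reading off the two directions of the bigraded diagram $\cJ_{\bullet,\bullet}$ separately, and then to produce the comparison as the canonical interchange morphism from a colimit-of-limits to the corresponding limit-of-colimits. I would first handle $P_\infty\C$. Fixing a cogroup-level $n$ and taking the colimit along the horizontal suspension direction gives $colim_m\, coGrp_n^{m}(\C)$; by Theorem \ref{cubeiscogroup} each $coGrp_n$ may be replaced by the corresponding $T_n$, whereupon this is exactly the defining colimit $P_n\C \simeq colim_m T_n^{m}\C$ computing the $n$-excisive approximation. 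The vertical forgetful maps then induce the Goodwillie tower maps $P_{n+1}\C\to P_n\C$, so taking the limit along the vertical direction produces $lim_n P_n\C = P_\infty\C$, which is the limit-of-colimits equivalence.

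For $\hat P_\infty\C$ I would take the two directions in the opposite order. For a fixed horizontal stage the limit along the vertical forgetful direction is $lim_n\, coGrp_n^{m}(\C)$, and the key claim is that this recovers the $m$-fold iterate $coGrp_\infty^{m}(\C)$. I would prove this by induction on the number of iterations $m$: the base case $m=1$ is precisely Lemma \ref{filterinfinity}, and for the inductive step one writes $coGrp_\infty^{m}(\C)=coGrp_\infty\big(lim_n\, coGrp_n^{m-1}(\C)\big)$, commutes the outer $A_\infty$-cogroup construction past the limit by Proposition \ref{comonoidsinlimit} to obtain a doubly-indexed limit $lim_{k,n}\, coGrp_k\, coGrp_n^{m-1}(\C)$, and then collapses it along the final diagonal $\mathbb N\to\mathbb N\times\mathbb N$ exactly as in the proof of Theorem \ref{limit}. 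Taking the colimit of these identifications along the horizontal direction yields $colim_m\, coGrp_\infty^{m}(\C)=\hat P_\infty\C$, the colimit-of-limits equivalence.

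With both equivalences in hand, the comparison functor is supplied abstractly: for any diagram indexed by $\mathbb N\times\mathbb N^{op}$ there is a canonical morphism from the iterated colimit-of-limits to the iterated limit-of-colimits, built from the compatible cones that the colimit-of-limits object admits over each vertical inverse system, and under the identifications above this is precisely the desired functor $\hat P_\infty\C\to P_\infty\C$. To check that it factors the comparison $\C\to P_\infty\C$, I would restrict attention to the constant initial column, which is equal to $\C$: its structure map into the colimit realizes the canonical functor $\C\to\hat P_\infty\C$, and postcomposing with the interchange recovers the levelwise maps $\C\to P_n\C$ that assemble to $\C\to P_\infty\C$.

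The step I expect to be the main obstacle is the inductive identification $lim_n\, coGrp_n^{m}(\C)\simeq coGrp_\infty^{m}(\C)$, since each iteration forces one to commute the $A_\infty$-cogroup construction past a limit of compactly generated $\infty$-categories; the finality argument must be reapplied at every stage so that the pairs of indices produced by Proposition \ref{comonoidsinlimit} collapse back to the single grading carried by $\cJ_{\bullet,\bullet}$.
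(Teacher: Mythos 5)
Your proposal is correct and takes essentially the same approach as the paper: the paper's proof is a one-line appeal to Lemma \ref{filterinfinity} and the proof of Theorem \ref{limit}, and the ingredients you deploy (identifying the row colimits with $P_n\C$ via Theorem \ref{cubeiscogroup}, and the column limits with iterates of $coGrp_\infty$ via Lemma \ref{filterinfinity}, Proposition \ref{comonoidsinlimit}, and the diagonal finality argument) are exactly the ones that citation gestures at. Your induction on the number of iterations merely spells out the details the paper leaves implicit.
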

\begin{proof}
This follows from Lemma \ref{filterinfinity} and the proof of Theorem \ref{limit}.
\end{proof}

Because filtered colimits and filtered limits do not commute in general in $\cat^\omega$ it seems unlikely that this map is an equivalence.  However one does have this result in the case that the Goodwillie tower converges.

\begin{lem}
Suppose the comparison functor $\C\rightarrow P_\infty(\C)$ is an equivalence, then the functor  $\hat{P}_\infty\C \rightarrow P_\infty\C$ is an equivalence.
\end{lem}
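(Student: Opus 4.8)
The plan is to show that, under the convergence hypothesis, every suspension functor appearing in the tower defining $\hat{P}_\infty\C$ is already an equivalence, so that the tower is essentially constant and $\hat{P}_\infty\C$ collapses onto $\C$; the comparison map is then pinned down using the factorization recorded in the preceding lemma.

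First I would feed the hypothesis into Theorem \ref{limit}. That theorem produces an equivalence $\Sigma\colon P_\infty\C \xrightarrow{\sim} coGrp_\infty(P_\infty\C)$ for every compactly generated $\C$; transporting along the assumed equivalence $\C \simeq P_\infty\C$ (which intertwines suspensions, hence commutes with the functorial construction $coGrp_\infty$) yields that the first suspension $\Sigma\colon \C \to coGrp_\infty(\C)$ is itself an equivalence. Writing $\cD_m := coGrp_\infty^{(m)}(\C)$ for the $m$-fold iterate, i.e. the $m$-th term of the defining tower, I would then argue by induction that each structure map $\Sigma\colon \cD_m \to \cD_{m+1} = coGrp_\infty(\cD_m)$ is an equivalence. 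The inductive hypothesis supplies an equivalence $\cD_m \simeq \C$; since the Goodwillie-tower construction $P_\infty$ is functorial on $\cat^\omega$ and so preserves equivalences, this gives $P_\infty\cD_m \simeq P_\infty\C \simeq \C \simeq \cD_m$, so $\cD_m$ again satisfies the convergence hypothesis and Theorem \ref{limit} applies verbatim to make $\Sigma\colon \cD_m \to coGrp_\infty(\cD_m)$ an equivalence.

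With every map in the sequential diagram an equivalence, the diagram is essentially constant, so the canonical map $\C \to \hat{P}_\infty\C$ from the initial term is an equivalence. By the preceding lemma the comparison functor $\C \to P_\infty\C$ factors as $\C \to \hat{P}_\infty\C \to P_\infty\C$, with second leg the interchange map under study and first leg the initial-term inclusion just discussed. Since the composite is an equivalence by hypothesis and the first leg is an equivalence by the previous paragraph, two-out-of-three forces $\hat{P}_\infty\C \to P_\infty\C$ to be an equivalence.

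The step I expect to require the most care is re-applying Theorem \ref{limit} at each stage of the induction: this hinges on $P_\infty$ sending the equivalence $\cD_m \simeq \C$ to an equivalence $P_\infty\cD_m \simeq P_\infty\C$, that is, on the equivalence-invariance of the entire tower construction on $\cat^\omega$, which in turn follows from the functoriality of each $T_n$ and $P_n$. Everything else is formal once the tower is known to be essentially constant.
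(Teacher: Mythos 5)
Your proof is correct and takes essentially the same route as the paper: the hypothesis plus Theorem \ref{limit} makes $\Sigma:\C\to coGrp_\infty(\C)$ an equivalence, hence every map in the tower defining $\hat{P}_\infty\C$ is an equivalence and $\C\to\hat{P}_\infty\C$ is one, after which two-out-of-three applied to the factorization $\C\to\hat{P}_\infty\C\to P_\infty\C$ finishes the argument exactly as in the paper. The only (harmless) difference is that your induction detours through equivalence-invariance of $P_\infty$ to re-run Theorem \ref{limit} at each stage, whereas it suffices to transport the equivalence $\Sigma_\C$ directly along your equivalence $\cD_m\simeq\C$, since any equivalence in $\cat^\omega$ intertwines the lifted suspensions; the paper leaves this induction entirely implicit.
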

\begin{proof}
If $\C\xrightarrow{\sim} P_\infty(\C)$ is an equivalence then $\Sigma:\C\xrightarrow{\sim} coGrp_\infty(\C)$ is an equivalence by Theorem \ref{limit} and so $\C\xrightarrow{\sim}\hat{P}_\infty\C$ is an equivalence.  The result then follows by the two out of three property for equivalences.
\end{proof}
It does not however appear that $\C\rightarrow \hat{P}_\infty\C$ being an equivalence implies that $\C\rightarrow P_\infty\C$ is an equivalence. Convergence of the Goodwillie tower remains a stronger result.

\section{Desuspensions in Spaces}\label{spaces}
In this section we study how our results interact with the convergence of the Goodwillie tower of spaces.  Heuts shows that the layers of the tower are equivalent to $\infty$-categories of truncated ``Tate coalgebras" in spectra. For a precise development see \cite{H}. Let $P_\infty \cS_*^{\ge n}$ denote the full sub $\infty$-category of $P_\infty \cS_*$ whose associated Tate coalgebra spectra are n-connected.  Heuts proves the following convergence result of the Goodwillie tower of spaces.

\begin{thm*}\cite[Theorem 1.3]{H}\label{convergencespacesgijs}
There is an equivalence of categories for $n > 1$ \[\cS_*^{\ge n} \overset{\sim}{\longrightarrow} P_\infty \cS_*^{\ge n} \]
\end{thm*}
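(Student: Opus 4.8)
The plan is to analyze the comparison functor $c\colon \cS_*\to P_\infty\cS_* = \lim_n P_n\cS_*$, whose $m$-th component is the left adjoint $\Sigma^\infty_m\colon \cS_*\to P_m\cS_*$, and to show that after restricting to the $n$-connected subcategories it becomes an equivalence. The central external input I would invoke is Goodwillie's classical theorem that the Taylor tower of the identity functor converges on simply connected spaces, together with the feature of Heuts's construction that the round-trip endofunctor $\Omega^\infty_m\Sigma^\infty_m$ of $\cS_*$ is precisely the $m$-excisive approximation $P_m(\mathrm{id}_{\cS_*})$ of the identity functor (for $m=1$ this is just the stabilization $\Omega^\infty\Sigma^\infty$). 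The strategy is then to prove full faithfulness and essential surjectivity separately, the former reducing formally to tower convergence and the latter being the genuine content.

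For full faithfulness on $\cS_*^{\ge n}$, I would compute mapping spaces directly. Since mapping spaces in a limit of $\infty$-categories are computed as limits, and each $\Sigma^\infty_m\dashv\Omega^\infty_m$, one has
\[ \mathrm{Map}_{P_\infty\cS_*}(cX,cY)\;\simeq\;\lim_m \mathrm{Map}_{P_m\cS_*}(\Sigma^\infty_m X,\Sigma^\infty_m Y)\;\simeq\;\lim_m \mathrm{Map}_{\cS_*}(X,\Omega^\infty_m\Sigma^\infty_m Y). \]
Using $\Omega^\infty_m\Sigma^\infty_m\simeq P_m(\mathrm{id}_{\cS_*})$ and commuting the corepresented mapping space past the limit, this is $\mathrm{Map}_{\cS_*}\!\big(X,\lim_m P_m(\mathrm{id}_{\cS_*})(Y)\big)$. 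Goodwillie convergence gives $Y\xrightarrow{\sim}\lim_m P_m(\mathrm{id}_{\cS_*})(Y)$ whenever $Y$ is simply connected, so the mapping space collapses to $\mathrm{Map}_{\cS_*}(X,Y)$; the relevant $\lim^1$ term vanishes because the connectivities of the fibers $D_m(\mathrm{id})(Y)$ tend to infinity with $m$. This establishes that $c$ is fully faithful on the simply connected subcategory, a fortiori on $\cS_*^{\ge n}$ for $n>1$.

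Essential surjectivity is the crux, and is where Heuts's identification of the layers of $P_\infty\cS_*$ with ($n$-connected, truncated) Tate coalgebras in spectra enters. Here I would use the right adjoint $R\colon P_\infty\cS_*\to\cS_*$ assembled from the $\Omega^\infty_m$, sending a compatible system $Z=(Z_m)$ to $\lim_m\Omega^\infty_m Z_m$, and attempt to show that for an $n$-connected object $Z$ the candidate space $X:=RZ$ is again $n$-connected and that the counit $cX\to Z$ is an equivalence. One checks the latter after projecting to each $P_m\cS_*$, where it becomes a comparison between $\Sigma^\infty_m RZ$ and $Z_m$; the Tate-coalgebra model identifies the associated graded of both sides with the same derivative data, and the linear-in-$m$ growth of the connectivity of the layers forces the comparison map to be an equivalence in the $n$-connected range. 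The restriction to $n>1$ reflects exactly the connectivity shift built into the Tate diagonal together with the convergence range of the identity tower.

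The main obstacle is precisely this surjectivity, that is, the \emph{integration} direction of Goodwillie calculus: reconstructing an actual space from its tower of derivatives and Tate-coalgebra structure. Full faithfulness is a formal consequence of convergence, but producing a realizing space and verifying the counit is an equivalence requires genuine control of the inverse limit and of the assembly of the layers, and it is here that the connectivity estimates, and hence the hypothesis $n>1$, are indispensable. I would expect the bulk of the technical work to be the verification that the counit is an equivalence layer by layer, where the Tate-coalgebra description and the quantitative Goodwillie connectivity bounds do the heavy lifting.
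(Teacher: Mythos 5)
A preliminary but important point: the paper contains no proof of this statement to compare against --- it is imported verbatim from \cite[Theorem 1.3]{H} and used as a black box (it is the convergence input that powers Theorem \ref{ksv}). So your proposal can only be judged against Heuts's own argument, and against its internal correctness.

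On that basis, your full-faithfulness half is essentially right and essentially formal: mapping spaces in the limit $P_\infty\cS_* = \lim_m P_m\cS_*$ are limits of mapping spaces, the adjunctions $\Sigma^\infty_m \dashv \Omega^\infty_m$ convert the $m$-th term to $\mathrm{Map}_{\cS_*}(X, \Omega^\infty_m\Sigma^\infty_m Y)$, the defining property of a weak $m$-excisive approximation identifies $\Omega^\infty_m\Sigma^\infty_m$ with $P_m(\mathrm{id}_{\cS_*})$, and Goodwillie's classical convergence theorem on simply connected spaces finishes it. (Your $\lim^1$ worry is vacuous at this level: $\mathrm{Map}(X,-)$ preserves limits of spaces, so no derived-limit correction arises; $\lim^1$ would only enter if you tried to compute homotopy groups termwise.) One glossed point worth flagging: you need the limit diagram of mapping spaces, under these adjunction identifications, to have transition maps given by the canonical tower maps $P_{m+1}(\mathrm{id})\to P_m(\mathrm{id})$, which requires compatibility of Heuts's functors $\Sigma_{m+1,m}$ with the Goodwillie tower of the identity --- true, but not automatic from what you wrote.

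The essential-surjectivity half, however, has a genuine gap, not merely deferred bookkeeping. To show the counit $c(RZ)\to Z$ is an equivalence you must commute the left adjoint $\Sigma^\infty_m$ past the inverse limit $RZ = \lim_k \Omega^\infty_k Z_k$, and your justification --- that ``the Tate-coalgebra model identifies the associated graded of both sides with the same derivative data'' --- does not suffice: two towers can have abstractly equivalent layers without a given comparison map between them being an equivalence. What is actually needed is that the counit itself induces equivalences on layers, i.e., that an $n$-connected compatible system is the Goodwillie tower of its own inverse limit. This ``integration'' statement is the real content of Heuts's theorem, proved by induction up the tower using quantitative Blakers--Massey-type estimates (for simply connected input the connectivity of the $k$-th layer of the identity grows linearly in $k$, so the maps $\Omega^\infty_{k+1}Z_{k+1}\to\Omega^\infty_k Z_k$ are increasingly connected and the limit is pro-constant enough for $\Sigma^\infty_m$ to pass through). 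Your sketch names the right inputs but supplies none of the induction; it also leaves unverified, though these are comparatively routine, that $c$ carries $\cS_*^{\ge n}$ into $P_\infty\cS_*^{\ge n}$ and that $RZ$ is again $n$-connected, without which the restricted statement does not even typecheck.
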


Theorem \ref{limit} in this case gives us the following result.

\begin{cor}\label{limitspaces}
For any $n>0$ the suspension functor induces an equivalence of $\infty$-categories 
\[ \Sigma: P_\infty \cS_*^{\ge n} \overset{\sim}{\longrightarrow} coGrp_{\infty}(P_\infty \cS_*^{\ge n+1})\]
\end{cor}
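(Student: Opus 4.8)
The plan is to deduce the statement by restricting the unrestricted equivalence of Theorem \ref{limit} to the connectivity filtration, rather than reproving anything from scratch. Applying Theorem \ref{limit} to the pointed, compactly generated $\infty$-category $\cS_*$ produces an equivalence
\[ \Sigma: P_\infty\cS_* \overset{\sim}{\longrightarrow} coGrp_\infty(P_\infty\cS_*), \]
which carries an object $X$ to the $A_\infty$-cogroup whose underlying object $A[1]$ is $\Sigma X$. It therefore suffices to show that this equivalence matches the full subcategory $P_\infty\cS_*^{\ge n}$ with the full subcategory $coGrp_\infty(P_\infty\cS_*^{\ge n+1})$. Being the restriction of an equivalence to full subcategories, any such functor is automatically fully faithful, so the entire content lies in identifying the two subcategories under the connectivity bookkeeping.

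First I would record that the connectivity of an $A_\infty$-cogroup object agrees with that of its underlying object: since $A[0]\simeq 0$ and $A[t]\simeq\bigvee_{i=1}^{t}A[1]$, all values of $A$ are $(n+1)$-connected as soon as $A[1]$ is, and wedges and zero objects preserve $(n+1)$-connectivity. Thus $P_\infty\cS_*^{\ge n+1}$ is closed under the finite colimits needed to form cogroups inside it, and $coGrp_\infty(P_\infty\cS_*^{\ge n+1})$ is precisely the full subcategory of $coGrp_\infty(P_\infty\cS_*)$ spanned by those cogroups whose underlying object lies in $P_\infty\cS_*^{\ge n+1}$. Consequently, under the displayed equivalence a cogroup lands in $coGrp_\infty(P_\infty\cS_*^{\ge n+1})$ exactly when its corresponding $\Sigma X$ lies in $P_\infty\cS_*^{\ge n+1}$.

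The crux is then a connectivity-shift statement: for $X\in P_\infty\cS_*$ one has $X\in P_\infty\cS_*^{\ge n}$ if and only if $\Sigma X\in P_\infty\cS_*^{\ge n+1}$. Because connectivity on $P_\infty\cS_*$ is measured on the associated Tate coalgebra spectrum, and suspension on $P_\infty\cS_*$ should induce the stable suspension — a one-fold shift — on these spectra, an $n$-connected Tate coalgebra is carried to an $(n+1)$-connected one, and this operation is invertible on connectivity. I expect this to be the main obstacle: it requires unwinding Heuts' identification of the layers of the Goodwillie tower with truncated Tate coalgebras and checking that the suspension functor on $P_\infty\cS_*$ is intertwined with the shift on the coalgebra spectra. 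Granting this, the equivalence restricts as desired: it is fully faithful by restriction to full subcategories, and essentially surjective because any cogroup $A\simeq\Sigma X$ with $(n+1)$-connected underlying object forces $X\in P_\infty\cS_*^{\ge n}$ by the shift statement. The hypothesis $n>0$ serves exactly to keep us in the range where this connectivity comparison is valid.
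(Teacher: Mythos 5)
Your proposal is correct and takes essentially the same route as the paper, which offers no argument beyond ``Theorem \ref{limit} in this case gives us the following result''; your restriction-to-full-subcategories argument and the connectivity bookkeeping for cogroups make explicit exactly what the paper leaves implicit. The connectivity-shift claim you flag as the crux does hold for the reason you sketch --- the projections out of $P_\infty\cS_*$ are left adjoints and so commute with suspension, and connectivity in $P_\infty\cS_*^{\ge n}$ is by definition measured on the associated coalgebra spectra, where $\Sigma$ shifts connectivity by exactly one --- so your hedge is harmless rather than a gap.
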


Note that a 2-connected $A_{\infty}$ comonoid is automatically cogroup-like. Combining this observation, Corollary \ref{limitspaces}, and \cite[Theorem 1.3]{H} we recover a theorem of Klein-Schwänzl-Vogt, \cite[Theorem 1.4]{KSV} in the form of an equivalence of $\infty$-categories.

\begin{thm}\label{ksv}
The suspension functor induces an equivalence of $\infty$-categories
\[ \Sigma: \cS_*^{\ge 2} \overset{\sim}{\longrightarrow} coMon_{\infty}(\cS_*^{\ge 3}) \]
In particular every 2-connected $A_{\infty}$-comonoid admits a desuspension to a 1-connected space.
\end{thm}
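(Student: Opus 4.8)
The plan is to read off the result from Corollary \ref{limitspaces} together with Heuts' convergence theorem \cite[Theorem 1.3]{H}, and then to replace cogroup objects by all comonoid objects using a connectivity argument.

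First I would instantiate Corollary \ref{limitspaces} at $n = 2$ to obtain the equivalence
\[ \Sigma: P_\infty \cS_*^{\ge 2} \xrightarrow{\sim} coGrp_{\infty}(P_\infty \cS_*^{\ge 3}). \]
Since $2 > 1$ and $3 > 1$, Heuts' convergence result supplies equivalences $\cS_*^{\ge 2} \xrightarrow{\sim} P_\infty \cS_*^{\ge 2}$ and $\cS_*^{\ge 3} \xrightarrow{\sim} P_\infty \cS_*^{\ge 3}$, realized by the comparison functor. I would transport the displayed equivalence along these. For the source the replacement is immediate. For the target, an equivalence of pointed $\infty$-categories with finite colimits preserves zero objects and pushouts, hence induces an equivalence on functor categories that restricts to an equivalence $coGrp_{\infty}(\cS_*^{\ge 3}) \xrightarrow{\sim} coGrp_{\infty}(P_\infty \cS_*^{\ge 3})$. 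After checking that the comparison functor intertwines the two suspension functors, this yields
\[ \Sigma: \cS_*^{\ge 2} \xrightarrow{\sim} coGrp_{\infty}(\cS_*^{\ge 3}). \]

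It then remains to upgrade $coGrp_{\infty}$ to $coMon_{\infty}$ on the right. Here I would invoke the observation recorded just before the theorem: any object of $coMon_{\infty}(\cS_*^{\ge 3})$ has $3$-connected, hence in particular $2$-connected, underlying space, and a $2$-connected $A_\infty$-comonoid is automatically cogroup-like. Since being cogroup-like is a property detected already on the homotopy category, and not extra structure, as noted in the remark following Lemma \ref{colimsegal}, the fully faithful inclusion $coGrp_{\infty}(\cS_*^{\ge 3}) \hookrightarrow coMon_{\infty}(\cS_*^{\ge 3})$ is essentially surjective, hence an equivalence. Composing with the equivalence of the previous paragraph gives $\Sigma: \cS_*^{\ge 2} \xrightarrow{\sim} coMon_{\infty}(\cS_*^{\ge 3})$, and reading off essential surjectivity yields the concluding desuspension statement.

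The main obstacle I anticipate is bookkeeping the compatibility of the suspension functors under Heuts' comparison equivalence: one must verify that the functor transported to $\cS_*^{\ge 2}$ is genuinely $\Sigma$ and not merely some abstract equivalence, which requires that the comparison functor $\cS_*^{\ge n} \to P_\infty \cS_*^{\ge n}$ commute, up to coherent homotopy, with suspension and with the forgetful maps carrying the cogroup structure. The connectivity input — that $2$-connected comonoids are cogroup-like — is the only other place where genuine homotopy theory enters, but it is supplied by the cited remark together with the homotopy-category detection of the cogroup condition.
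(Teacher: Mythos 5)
Your proposal is correct and follows essentially the same route as the paper: the paper's proof is precisely the commutative square whose horizontal arrows are the comparison equivalences of Heuts' convergence theorem, whose right vertical arrow is the equivalence of Corollary \ref{limitspaces}, and whose left vertical arrow is then an equivalence by two-out-of-three, with the passage from $coGrp_\infty$ to $coMon_\infty$ supplied by the observation that $2$-connected $A_\infty$-comonoids are automatically cogroup-like. Your version merely phrases the square as ``transport along the comparison equivalences'' and makes explicit the compatibility checks and the $coGrp_\infty \hookrightarrow coMon_\infty$ step that the paper records just before the theorem statement.
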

\begin{proof}
Consider the square
\begin{center}
    \begin{tikzcd}
    S_*^{\ge 2}\ar[r, "\sim"]\ar[d]     &       P_{\infty}S_\star^{\ge 2}\ar[d, "\sim"] \\
    coGrp_{\infty}(S_\star^{\ge 3})\ar[r, "\sim"] &  coGrp_{\infty}(P_{\infty}S_\star^{\ge 3})
    \end{tikzcd}
\end{center}
By \cite[Theorem 1.3]{H} the horizontal functors are equivalences and by Corollary \ref{limitspaces} the right vertical map is an equivalence, hence also the left hand vertical functor is an equivalence.
\end{proof}

The benefit to our approach has a distinct advantage to that of \cite{KSV}, where the authors prove their claims through a variety of arguments involving connectivity.  Our approach isolates these technicalities to a question purely about the convergence of the Goodwillie tower. This general framework also allows us to attack the question of desuspension in a general pointed, compactly generated $\infty$-category.

In the 1-connected case, and in fact the Goddwillie convergent case which includes nilpotent spaces, we can only make a weaker claim of fully faithfulness. Let $\cS_*^{conv}$ denote the $\infty$-category of connected spaces for which the Goodwillie tower of the identity converges.

\begin{prop}
The suspension functor induces a fully faithful functor of $\infty$-categories
\[ \Sigma: \cS_*^{conv } \hookrightarrow coGrp_{\infty}(\cS_*^{\ge 2}) \]
\end{prop}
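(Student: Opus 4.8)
The plan is to run the argument of Theorem~\ref{ksv} one connectivity level lower, where Heuts' convergence result only guarantees fully faithfulness rather than an equivalence. I would assemble the commutative square
\begin{center}
\begin{tikzcd}
\cS_*^{conv} \arrow[r, "\Sigma"] \arrow[d, "c"'] & coGrp_\infty(\cS_*^{\ge 2}) \arrow[d, "coGrp_\infty(c')"] \\
P_\infty\cS_* \arrow[r, "\sim"', "\Sigma"] & coGrp_\infty(P_\infty\cS_*)
\end{tikzcd}
\end{center}
in which $c$ is the comparison functor $\cS_*\to P_\infty\cS_*$ restricted to $\cS_*^{conv}$, the functor $c'$ is the comparison functor restricted to $\cS_*^{\ge 2}$, and the bottom arrow is the equivalence supplied by Theorem~\ref{limit} applied to $\C=\cS_*$. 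The square commutes because the comparison functor is a left adjoint in $\cat^\omega$: it preserves the zero object and pushouts, hence commutes with $\Sigma$, and it carries $A_\infty$-cogroup objects to $A_\infty$-cogroup objects compatibly, since by Lemma~\ref{colimsegal} the cogroup conditions are colimit conditions and are therefore preserved by any colimit-preserving functor.

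Next I would verify that three of the four legs are fully faithful. The bottom arrow is an equivalence by Theorem~\ref{limit}. For the right arrow, Heuts' theorem \cite[Theorem 1.3]{H} identifies $c'$ with the equivalence $\cS_*^{\ge 2}\xrightarrow{\sim}P_\infty\cS_*^{\ge 2}$ followed by the full inclusion $P_\infty\cS_*^{\ge 2}\hookrightarrow P_\infty\cS_*$, so $c'$ is fully faithful; since $coGrp_\infty(-)$ is built as a full subcategory of a functor category, it preserves fully faithful functors, whence $coGrp_\infty(c')$ is fully faithful. The crucial leg is the left arrow $c$, whose fully faithfulness on $\cS_*^{conv}$ is exactly the content of Goodwillie convergence: the unit of the adjunction $\Sigma_\infty^\infty\dashv\Omega_\infty^\infty$ at a space $X$ is the canonical map $X\to \lim_n \Omega_n^\infty\Sigma_n^\infty X\simeq \lim_n P_n(1)(X)=P_\infty(1)(X)$, using the identification $G_nF_n\simeq P_n(1)$ from the definition of a weak $n$-excisive approximation, and by definition $X$ lies in $\cS_*^{conv}$ precisely when this map is an equivalence.

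Finally I would conclude by cancellation for fully faithful functors. Commutativity gives $coGrp_\infty(c')\circ\Sigma\simeq \Sigma\circ c$, and the right-hand composite is an equivalence followed by a fully faithful functor, hence fully faithful; since $coGrp_\infty(c')$ is itself fully faithful, the top arrow $\Sigma$ must be fully faithful. I expect the main obstacle to be the left leg, namely making precise that the defining convergence condition for $\cS_*^{conv}$ really coincides with fully faithfulness of $c$; this requires identifying the unit of the $\infty$-categorical comparison adjunction with the Goodwillie tower of the identity functor, via $G_nF_n\simeq P_n(1)$ and commuting $\lim_n$ past the functors $\Omega_n^\infty$. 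Checking coherent, rather than merely objectwise, commutativity of the square is a secondary but routine point, handled by the colimit-preservation of the comparison functor.
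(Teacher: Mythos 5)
Your proof is correct and takes essentially the same route as the paper: the paper forms the same comparison square (transposed, and with the connectivity-restricted categories $P_\infty\cS_*^{\ge 1}$ and $coGrp_\infty(P_\infty\cS_*^{\ge 2})$ at the corners where you use $P_\infty\cS_*$ and $coGrp_\infty(P_\infty\cS_*)$), gets full faithfulness of the comparison functor from Goodwillie convergence, uses Corollary \ref{limitspaces} and Heuts' theorem for the other two legs, and concludes by the same cancellation argument for fully faithful functors. The only differences are cosmetic ones of bookkeeping --- your right-hand leg $coGrp_\infty(c')$ is merely fully faithful where the paper's corresponding leg is an equivalence, which costs nothing in the cancellation --- and your explicit verification of the square's commutativity and of the identification of $\cS_*^{conv}$ with the locus where the comparison unit is an equivalence spells out steps the paper leaves implicit.
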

\begin{proof}
In the corresponding square for the case of $S_*^{conv}$
\begin{center}
    \begin{tikzcd}
    S_*^{conv}\ar[r, hook]\ar[d, hook]     &       P_{\infty}\cS_\star^{\ge 1}\ar[d, "\sim"] \\
    coGrp_{\infty}(S_\star^{\ge 2})\ar[r, "\sim"] &  coGrp_{\infty}(P_{\infty}S_\star^{\ge 2})
    \end{tikzcd}
\end{center}
we have again that the bottom right cospan is an equivalence, but now the top horizontal functor is only fully faithful, so that the left vertical functor is fully faithful.
\end{proof}

\begin{rem} Hopkins first claimed in \cite[Theorem 1']{MH} that the suspension functor $\Sigma: \cS_*\to coGrp_\infty(\cS_*^{\ge 2})$ is essentially surjective. However this result does not follow immediately from our work without further exploring the convergence of the Goodwillie tower of pointed spaces.  See also the discussion after Theorem 1.4 in \cite{KSV}.
\end{rem}

We end with a discussion on $n$-excisive approximations to functors. A reduced functor $F:\C\rightarrow \cD$ is 1-excisive if it takes a suspension diagram to a pullback. This tells us that the value of $F$ on $X$ can be recovered in $\cD$ from the suspension $F(X)\simeq \Omega F(\Sigma X)$.  In light of Theorem \ref{layer}, we see that a reduced functor is $n$-excisive when it can recover its values not from the suspension alone, but from the suspension equipped with its $A_n$-cogroup structure.  Examining the construction of the excisive approximation in Goodwillie's original work \cite{taylor} we see this is exactly the property being forced. A functor can be recovered from its tower when the value at any object can be recovered from the functors value on the entire  $A_\infty$-cogroup structure on the suspension.

\bibliographystyle{alpha}
\bibliography{ref}

\end{document}